\newcommand{\vect}[1]{\left[ \begin{array}{c} #1 \end{array} \right]}
\newcommand{\subcom}[2]{_{\{#1\}#2}}
\newtheorem{theorem}{Theorem}
\newtheorem{remark}{Remark}
\newtheorem{definition}{Definition}
\begin{document}

\title{Partitioned and implicit-explicit general linear methods for ordinary differential equations\footnote{This paper is dedicated to Prof. J.C. Butcher's 80-th birthday.}}

\author[1]{Hong Zhang}
\author[1]{Adrian Sandu\thanks{Corresponding author. Tel. 540-231-2193.}}
\affil{Computational Science Laboratory \\ Department of Computer Science\\ Virginia Polytechnic Institute and State University\\ Blacksburg, VA 24061\\ E-mails: \{zhang,sandu\}@vt.edu.}

\date{\today}

\maketitle

\begin{abstract}
Implicit-explicit (IMEX) time stepping methods can efficiently solve differential equations with both stiff and nonstiff components. 
IMEX Runge-Kutta methods and IMEX linear multistep methods have been studied in the literature.
In this paper we study new implicit-explicit methods of general linear type (IMEX-GLMs). We develop an order conditions theory for high stage order partitioned GLMs
that share the same abscissae, and show that no additional coupling order conditions are needed. Consequently, GLMs offer an excellent framework for the construction of
multi-method integration algorithms. Next, we propose a family of IMEX schemes based on 
diagonally-implicit multi-stage integration methods and construct practical schemes of order three. 
Numerical results confirm the theoretical findings.

{\bf Keywords:} implicit-explicit integration, general linear methods, DIMSIM 
\end{abstract}

\newpage
\thispagestyle{empty}
\tableofcontents
\newpage
\setcounter{page}{2}

\section{Introduction}
Implicit-explicit (IMEX) time integration schemes are becoming increasingly popular for solving multiphysics problems with both stiff and nonstiff components, 
which arise in many application areas such as mechanical and chemical engineering, astrophysics, meteorology and oceanography, and environmental science.
Examples of multiphysics problems with both stiff and nonstiff components include advection-diffusion-reaction equations, 
fluid-structure interactions, and Navier-Stokes equations. 
Such problems can be expressed concisely as the system of ordinary differential equations (ODEs)
\begin{equation}
 \label{eqn:ode_imex}
y' = f(t,y) + g(t,y)\,
\quad t_0 \le t \le t_F\,, \quad y(t_0) = y_0\,, 
\end{equation}
where $f$ corresponds to the nonstiff term, and $g$ corresponds to the stiff term. 
In case of systems of partial differential equations (PDEs) the system \eqref{eqn:ode_imex}
appears after semi-discretization in space.

An IMEX scheme treats the nonstiff term explicitly and the stiff term implicitly, therefore 
combining the low cost of explicit methods with the favorable stability properties of implicit methods. 
IMEX linear multistep methods have been developed in \cite{Ascher_1995,Frank_1997_IMEXstability,Hundsdorfer_2007_IMEX_LMM}, 
and IMEX Runge-Kutta methods have been built in 
\cite{Ascher_1997,Boscarino_2009_IMEXuniform,Pareschi_2000,Verwer_2004}.

The general linear method (GLM) family proposed by J.C Butcher \cite{Butcher_1993a} generalizes both Runge-Kutta and linear multistep methods.
The added complexity improves the flexibility to develop methods with better stability and accuracy properties. 
While Runge-Kutta and linear multistep methods are special cases of GLMs, the framework allows for the construction of many other methods as well. 
Here we focus on the diagonally implicit multistage integration methods (DIMSIM) \cite{Butcher_1993b}, which are both efficient and accurate, and 
 great potentials for practical use. 
GLM can overcome the limitations of both linear multistep methods (lack of A-stability at high orders) and of Runge-Kutta methods (low stage order f
leading to order reduction). A complete treatment of GLMs can be found in the book of Jackiewicz \cite{Jackiewicz_2009_book}.

In this study we develop the concept of partitioned DIMSIM methods, and develop an order conditions theory
for a family of such methods. This shows that partitioned GLM is a great framework for developing multi-methods.
Next, we propose a new family of implicit-explicit methods based on pairs of DIMSIMs, and develop second and third order methods on this class.

In our earlier work \cite{Zhang_2012} we have developed second order IMEX-GLM schemes.
While this paper was under study, we became aware of an effort to construct IMEX-GLM schemes for Hamiltonian systems \cite{Butcher_2013_imex-glm}.

The paper is organized as follows. Section \ref{sec:GLM} reviews the class of general linear methods.
The new concept of partitioned DIMSIM schemes is proposed in Section \ref{sec:partitioned-GLM}, and the order conditions theory is developed.
IMEX-DIMSIM schemes are constructed in Section \ref{sec:IMEX-schemes}. Linear stability is analizes in section \ref{sec:PR-convergence},
and Prothero-Robinson convergence in section \ref{sec:PR-convergence}. IMEX methods of second and third order are built in Sections 
\ref{sec:IMEX-order2} and \ref{sec:IMEX-order3a}, respectively.
Numerical results for van der Pol system and for the two dimensional gravity waves equations are presented in Section \ref{sec:results}.
Section \ref{sec:conclusions} draws conclusions and points to future work.

\section{General linear methods}\label{sec:GLM}

\subsection{Representation of general linear methods}\label{sec:GLM-representation}
Consider the initial value problem for an autonomous system of differential equations in the form
\begin{equation}
\label{eqn:ivp}
 \displaystyle y'(t) =f(y)\,, \quad t_0 \le t \le t_F\,, \quad y(t_0) = y_0 \,,
\end{equation}
with $f: \mathbb{R}^d \rightarrow \mathbb{R}^d$ and $y(t) \in \mathbb{R}^d$. 
GLMs \cite{Jackiewicz_2009_book} for (\ref{eqn:ivp}) can be represented by the abscissa vector $\mathbf{c} \in \mathbb{R}^{s}$, and four coefficient matrices
$\mathbf{A} \in \mathbb{R}^{s \times s}$, $\mathbf{U} \in \mathbb{R}^{s \times r}$, $\mathbf{B} \in \mathbb{R}^{r \times s}$ and $\mathbf{V} \in \mathbb{R}^{r \times r}$ which can be represented compactly in the following tableau 
\begin{center}
  \begin{tabular}{c|c}
   $\mathbf{A}$ & $\mathbf{U}$ \\ \hline
   $\mathbf{B}$ & $\mathbf{V}$ \\
\end{tabular}\,.
\end{center}
On the uniform grid $t_n=t_0 + nh$, $n=0,1,\dots,N$, $Nh=t_F-t_0$, one step of the GLM reads
\begin{subequations}
\label{eqn:GLMs} 
\begin{eqnarray}
 Y_i &=& h\sum_{j=1}^s a_{i,j} \, f(Y_j)+\sum_{j=1}^r u_{i,j} \, y_j^{[n-1]}, \: i=1,\dots,s,~ \\
 y_i^{[n]}&=& h\sum_{j=1}^s b_{i,j} \, f(Y_j) + \sum_{j=1}^r v_{i,j} \, y_j^{[n-1]}, \: i=1,\dots,r,~
\end{eqnarray}
\end{subequations}
where $s$ is the number of internal stages and r is the number of external stages. 
Here, $h$ is the step size, $Y_i$ is an approximation to $y(t_{n-1}+c_i h)$ and $y_i^{[n]}$ is an approximation to the linear combination of the derivatives of $y$ at the point $t_n$.
The method \eqref{eqn:GLMs}  can be represented in vector form 
\begin{subequations}
\label{eqn:GLMs_vector} 
\begin{eqnarray}
  Y &=& h \, \left(\mathbf{A} \otimes \mathbf{I}_{d \times d} \right) \, F(Y)+ \left(\mathbf{U} \otimes \mathbf{I}_{d \times d} \right)\, y^{[n-1]},~ \\
 y^{[n]} &=& h \, \left( \mathbf{B} \otimes \mathbf{I}_{d \times d} \right) \, F(Y) + \left(\mathbf{V} \otimes  \mathbf{I}_{d \times d} \right)\, y^{[n-1]},
\end{eqnarray}
\end{subequations}
where $\mathbf{I}_{d \times d} $ is an identity matrix of the dimension of the ODE system.

\subsection{Stability considerations}\label{sec:stability}
The linear stability of method \eqref{eqn:GLMs} is analyzed in terms of its stability matrix
\begin{equation}
\label{eqn:GLM_stability_matrixr} 
 \mathbf{M}(z) = \mathbf{V} + z \, \mathbf{B} \, \left(\mathbf{I}_{s \times s} -z\mathbf{A} \right)^{-1}\, \mathbf{U}\,,
\end{equation}
and the corresponding stability function
\begin{equation}
\label{eqn:GLM_stability_function} 
 p(w,z) = \det(w \mathbf{I}_{r \times r} - \mathbf{M}(z)),
\end{equation}
where $w,z \in \mathbb{C}$.
A desirable property is the inherited Runge-Kutta stability \cite{Wright_2002,Butcher_2003}.
This means that the stability function \eqref{eqn:GLM_stability_function}  has the form
\begin{equation}
 p(w,z) = w^{s-1} \, \bigl(w-R(z)\bigr)\,,
\end{equation}
where $R(z)$ is the stability function of Runge Kutta method of order $p=s$. 

\subsection{Accuracy considerations}\label{sec:accuracy}

We assume that the components of the input vector $y_i^{[n-1]}$ for the next step in \eqref{eqn:GLMs} satisfy 
\begin{equation}
\label{eqn:GLM-order_assumption}
 y_i^{[n-1]} = \sum_{k=0}^p q_{i,k} h^k \, y^{(k)} (t_{n-1}) + \mathcal{O}(h^{p+1}), \quad i = 1,\dots,r,
\end{equation}
for some real parameters $q_{i,k}$, $i =1,\dots,r$, $k=0,1,\dots,p$. 

The method \eqref{eqn:GLMs} has {\it order} $p$ if the output vector $y_i^{[n]}$ satisfies
\begin{equation}
\label{eqn:GLM-order}
 y_i^{[n]} = \sum_{k=0}^p q_{i,k} h^k y^{(k)} (t_n) + \mathcal{O}(h^{p+1}), \quad i = 1,\dots,r,
\end{equation}
for the same parameters $q_{i,k}$ of \eqref{eqn:GLM-order_assumption}.

The method \eqref{eqn:GLMs} has {\it stage order} $q$ if the internal stage vectors $Y_i^{[n]}$ are approximations of order $q$ to the solution at the time points $t_{n-1}+c_i \, h$
\begin{equation}
\label{eqn:GLM-stage-order}
 Y_i^{[n]} = y(t_{n-1}+c_i \, h) + \mathcal{O}(h^{q+1}), \quad i = 1,\dots,s\,.
\end{equation}
We collect the parameters $q_{i,k}$ in the matrix $\mathbf{W}$ for convenience
\begin{equation}
 \mathbf{W}=[\mathbf{q}_0 \quad \mathbf{q}_1 \quad \cdots \quad \mathbf{q}_p] = \left[
\begin{tabular}{c c c c}
$q_{1,0}$ & $q_{1,1}$ & $\cdots$ & $q_{1,p}$ \\
 $q_{2,0}$ & $q_{2,1}$ & $\cdots$ & $q_{2,p}$ \\
$\vdots$ & $\vdots$ & $\ddots$ & $\vdots$ \\
$q_{r,0}$ & $q_{r,1}$ & $\cdots$ & $q_{r,p}$ \\
\end{tabular}
\right]. 
\end{equation}
\begin{theorem}[GLM order conditions \cite{Jackiewicz_2009_book}]
 Assume that $y^{[n-1]}$ satisfies (\ref{eqn:GLM-order_assumption}). Then the GLM (\ref{eqn:GLMs}) has order $p$ (\ref{eqn:GLM-order}) and stage order $q=p$ 
(\ref{eqn:GLM-stage-order}) if and only if
\begin{subequations}
\label{eqn:GLM-order_conditions}
\begin{eqnarray}
\label{eqn:GLM-order_condition-1}
e^{cz} = z\mathbf{A}e^{cz} + \mathbf{U}w(z) + \mathcal{O}(z^{p+1}), \\
\label{eqn:GLM-order_condition-2}
e^z w(z) = z \mathbf{B} e^{cz} + \mathbf{V} w(z) + \mathcal{O}(z^{p+1})\,,
\end{eqnarray}
where 
\[
e^{cz} = [e^{c_1z}, \dots, e^{c_sz}]^T\,, \quad w(z)= \sum_{j=0}^p \mathbf{q}_j \, z^j\,.
\]
For stage order $q=p-1$ condition \eqref{eqn:GLM-order_condition-1} is replaced by
\begin{eqnarray}
\label{eqn:GLM-order_condition-1-b}
e^{cz} &=& z\mathbf{A}e^{cz} + \mathbf{U}w(z) + \left(\frac{\mathbf{c}^p}{p!} - \frac{\mathbf{A}\, \mathbf{c} }{(p-1)!} - \mathbf{U} \mathbf{q}_p\right)z^p + \mathcal{O}(z^{p+1})\,.
\end{eqnarray}
\end{subequations}
\end{theorem}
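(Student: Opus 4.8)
The plan is to derive the order conditions from the definition of order and stage order by expanding everything in Taylor series in $h$ (equivalently, as formal power series in $z$) and matching coefficients. First I would insert the stage-order ansatz \eqref{eqn:GLM-stage-order} with $q=p$, i.e. $Y_i = y(t_{n-1}+c_i h) + \mathcal{O}(h^{p+1})$, together with the assumption \eqref{eqn:GLM-order_assumption} on the incoming vector $y^{[n-1]}$, into the stage equation of \eqref{eqn:GLMs_vector}. Using the scalar test observation that it suffices to verify the conditions on $y'=\lambda y$ (or, cleanly, on the componentwise expansion $y^{(k)}(t_{n-1}+c_ih)=\sum_k \tfrac{(c_ih)^k}{k!}y^{(k)}(t_{n-1})$ and $h^k f(Y_j) = h^k y^{(k+1)}(t_{n-1}+c_jh)+\mathcal{O}(h^{p+1})$), the stage equation becomes, after collecting powers of $h$ and writing $z$ for the symbolic derivative scaling, exactly
\begin{equation*}
e^{cz} = z\,\mathbf{A}\,e^{cz} + \mathbf{U}\,w(z) + \mathcal{O}(z^{p+1}),
\end{equation*}
which is \eqref{eqn:GLM-order_condition-1}. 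The key bookkeeping step is recognizing that $h\,f(Y_j)$ contributes $y'$ evaluated at $t_{n-1}+c_jh$, so the $\mathbf{A}$ term carries one extra power of $h$ relative to $e^{cz}$, hence the factor $z$.

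Next I would do the analogous substitution into the output equation of \eqref{eqn:GLMs_vector}: replace $F(Y)$ by the derivative values as above, replace $y^{[n-1]}$ by its expansion around $t_{n-1}$, and demand that the result equal $\sum_{k=0}^p \mathbf{q}_k h^k y^{(k)}(t_n) + \mathcal{O}(h^{p+1})$, i.e. the same combination but expanded around $t_n = t_{n-1}+h$. Re-expanding $y^{(k)}(t_n)$ about $t_{n-1}$ introduces the factor $e^z$ on the left-hand side, and matching powers of $z$ up to $z^p$ yields
\begin{equation*}
e^z\,w(z) = z\,\mathbf{B}\,e^{cz} + \mathbf{V}\,w(z) + \mathcal{O}(z^{p+1}),
\end{equation*}
which is \eqref{eqn:GLM-order_condition-2}. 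For the converse direction, I would run the same computation backwards: assuming \eqref{eqn:GLM-order_condition-1}--\eqref{eqn:GLM-order_condition-2} hold as polynomial identities modulo $z^{p+1}$, substitute \eqref{eqn:GLM-order_assumption} into \eqref{eqn:GLMs_vector}, and read off that $Y_i$ satisfies \eqref{eqn:GLM-stage-order} with $q=p$ and that $y^{[n]}$ satisfies \eqref{eqn:GLM-order}; here one must be slightly careful that the stage equations are solved consistently, which holds for $h$ small since $\mathbf{I}-z\mathbf{A}$ is invertible as a formal power series.

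For the stage order $q = p-1$ variant, the internal stages are only accurate to $\mathcal{O}(h^p)$, so the ansatz for $Y_i$ picks up an unspecified $\mathcal{O}(h^p)$ defect; propagating this defect through $h\,f(Y_j)$ (which damps it by one power of $h$, hence it does not affect the stage equation to the needed order — but it does affect the comparison that defines the residual) and isolating the $z^p$ coefficient in the stage relation produces the correction term $\bigl(\tfrac{\mathbf{c}^p}{p!} - \tfrac{\mathbf{A}\mathbf{c}^{p-1}}{(p-1)!} - \mathbf{U}\mathbf{q}_p\bigr)z^p$ appearing in \eqref{eqn:GLM-order_condition-1-b}. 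I expect the main obstacle to be purely organizational rather than deep: namely, setting up the Taylor/power-series bookkeeping so that the "one extra power of $h$ from $f(Y_j)=y'(\cdot)$" is tracked correctly and uniformly, and justifying the reduction to the scalar linear test equation (or, equivalently, arguing that matching the formal series coefficients is both necessary and sufficient for the order and stage-order relations to hold for general smooth $f$). Since this is a known result from Jackiewicz's book, I would cite \cite{Jackiewicz_2009_book} for the detailed verification and present the series-matching argument as the proof sketch.
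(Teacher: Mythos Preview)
Your proposal is correct and in fact more detailed than the paper's own treatment: the paper simply writes ``See Butcher \cite{Butcher_1993a} and Jackiewicz \cite[Section 2.4]{Jackiewicz_2009_book}'' and gives no argument at all. Your Taylor/power-series matching sketch is exactly the standard derivation found in those references, so your plan to outline the expansion and then cite \cite{Jackiewicz_2009_book} is entirely appropriate and aligned with what the paper does.
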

\begin{proof}
See Butcher \cite{Butcher_1993a} and Jackiewicz \cite[Section 2.4]{Jackiewicz_2009_book}.
\end{proof}

It is shown in \cite{Jackiewicz_2009_book} that a GLM \eqref{eqn:GLMs} has order $p$ and stage order $q$ with $q = p = r = s$ 
if and only if
\begin{equation}
\label{eqn:coeB}
\mathbf{B} = \mathbf{B}_0 -\mathbf{A} \mathbf{B}_1 - \mathbf{V} \mathbf{B}_2 + \mathbf{V} \mathbf{A},
\end{equation}
where the matrices $\mathbf{B}_{0},\mathbf{B}_{1},\mathbf{B}_{2} \in \mathbb{R}^{s \times s}$ are defined by
\begin{equation}
\left( \mathbf{B}_0 \right)_{i,j} =  \frac{\int _0^{1+c_i} \phi _j(x) d x}{\phi_j(c_j)}\,, \quad  
\left( \mathbf{B}_1 \right)_{i,j} =  \frac{\phi_j(1+c_i)}{\phi_j(c_j)}\,, \quad   
\left( \mathbf{B}_2 \right)_{i,j} = \frac{\int_0 ^{c_i} \phi_j(x) d x}{\phi_j(c_j)}\,, 
\end{equation}
with
\begin{displaymath}
 \phi_i(x) = \prod_{j=1,j \neq i}^s (x-c_j), \quad  i=1,\dots,s\,.
\end{displaymath}
%

\subsection{Starting and ending procedures}\label{sec:starting}
Assumption \eqref{eqn:GLM-order_assumption} requires to compute the initial vector $y^{[0]}$ by a starting procedure satisfying
\begin{equation}
 y_i^{[0]} = \sum_{k=0}^p q_{i,k} h^k y^{(k)} (t_0) + \mathcal{O}(h^{p+1}), \quad i = 1,\dots,r\,.
\end{equation}
Dense output is based on derivative approximations of the form
\begin{equation}
\label{eqn:dense_output}
h^k y^{(k)} (t_n) \approx \sum_{i=0}^s \beta_{k,i} f(Y_i) + \sum_{j=0}^r \gamma_{k,j} y_j^{[n-1]}, \quad k=0,1,\dots,r\,.
\end{equation}
It is shown in \cite{Butcher_1993a,Butcher_1997} that \eqref{eqn:dense_output} is accurate within $\mathcal{O}(h^{p+1})$ if and only if
\begin{equation}
\label{eqn:BV}
[1,z,\dots,z^p]^T e^z = z \widetilde{\mathbf{B}} e^{cz} + \widetilde{\mathbf{V}} w(z) + \mathcal{O}(z^{p+1})
\end{equation}
where $\widetilde{\mathbf{B}}= [\beta_{k,i}]$ and $\widetilde{\mathbf{V}}=[\gamma_{k,i}]$. 
The termination procedure uses \eqref{eqn:dense_output} with $k=0$ to generate the solution at the last step
\begin{equation}
\label{eqn:solution}
y(t_n) \approx \sum_{i=0}^s \beta_{0,i} f(Y_i) + \sum_{j=0}^r \gamma_{0,j} y_j^{[n-1]}.
\end{equation}
%

\subsection{Diagonally implicit multistage integration methods}\label{sec:DIMSIM}
Diagonally implicit multistage integration methods (DIMSIMs) are a subclass of GLMs characterized by the following properties \cite{Butcher_1993a}:
\begin{enumerate}
 \item $\mathbf{A}$ is lower triangular with the same element $a_{i,i}=\lambda$ on the diagonal;
\item $\mathbf{V}$ is a rank-1 matrix with the nonzero eigenvalue equal to one to guarantee preconsistency;
\item The order $p$, stage order $q$, number of external stages $r$, and number of internal stages $s$ are related by $q\in \{ p-1, p\}$ and $r\in \{ s,s+1\}$.
\end{enumerate}
In this work we focus on DIMSIMs with $p=q=r=s$, $\mathbf{U}= \mathbf{I}_{s \times s}$, and $\mathbf{V}=\mathbf{1}_s\, v^T$, where $v^T\, \mathbf{1}_s=1$ \cite{Jackiewicz_2009_book}.
DIMSIMs can be categorized into four types according to \cite{Butcher_1993a}. 
Type 1 or type 2 methods have $a_{i,j} = 0$  for $j \ge i$ and are suitable for a sequential computing environment, while type 2 and type 3 methods 
have $a_{i,j} = 0$  for $j \ne i$ and are suitable for parallel computation. Methods of type 1 and 3 are explicit ($a_{i,i} = 0$), while methods of type 2
and 4 are implicit ($a_{i,i} = \lambda \neq 0$) and potentially useful for stiff systems.

%
%
%

\section{Partitioned general linear methods}\label{sec:partitioned-GLM}

Consider the partitioned system of ODEs
\begin{equation}
\label{eqn:partitioned-ode}
 y'= \vect{ y_{\{1\}}\\ \vdots\\ y_{\{N\}} }' = \vect{ f_{\{1\}} (y_{\{1\}},\dots, y_{\{N\}}) \\ \vdots \\ f_{\{N\}}( y_{\{1\}},\dots, y_{\{N\}} ) } = f(y)\,,
\end{equation}
where the solution vector is separated into components $y_{\{m\}}$, $m=1,\dots,N$, each of which may be itself a vector.

A {\it partitioned general linear method} solves \eqref{eqn:partitioned-ode} by applying a different GLM to each component.
If not explicitly stated otherwise , we use the subscript $\{m\}$ to denote the coefficients specific to the $m$-th component of the partitioned system.
We have the following

\begin{definition}[Partitioned GLM]
One step of a partitioned GLM has the form
\begin{subequations}
 \label{eqn:partitoned_glm}
\begin{eqnarray}
 \label{eqn:partitoned_glm_stage}
  Y_{\{m\}i} &=& h\sum_{j=1}^s a_{\{m\}i,j} \, f_{\{m\}} (Y_{\{1\}j},Y_{\{2\}j},\dots, Y_{\{N\}j}) \\
\nonumber  && + \sum_{j=1}^r u_{\{m\}i,j} \, y_{\{m\}j} ^{[n-1]}, \: i=1,\dots,s, \\
 \label{eqn:partitoned_glm_final}
 y_{\{m\}i}^{[n]} &=& h\sum_{j=1}^s b_{\{m\}i,j} \, f_{\{m\}} (Y_{\{1\}j},Y_{\{2\}j},\dots, Y_{\{N\}j} ) \\
\nonumber  && + \sum_{j=1}^r v_{\{m\}i,j} \, y_{\{m\}j}^{[n-1]}, \: i=1,\dots,r,
\end{eqnarray}
\end{subequations}
where $a_{\{m\}i,j}, u_{\{m\}i,j} , b_{\{m\}i,j}$, and $c\subcom{m}{i}$ for $m=1,\dots,N$ represent the coefficients of $N$ different GLMs. 
\end{definition}

\begin{definition}[Internal consistency]
A partitioned GLM \eqref{eqn:partitoned_glm} is {\it internally consistent} if all component methods share the same abscissae, $\mathbf{c}_{\{m\}i}=\mathbf{c}_i$ for $m=1,\dots,N$.
\end{definition}
Internal consistency means that all stage components approximate the solution components at the same time point, i.e., $[Y_{\{1\}j}, ,\dots, Y_{\{N\}j}] \approx y(t_n+c_j h)$, for all $j=1,\dots,s$.
An internally consistent partitioned GLM method \eqref{eqn:partitoned_glm} can be represented compactly as
\begin{equation}
\label{eqn:compact-partitioned-glm}
\mathbf{c}_{\{m\}}=\mathbf{c}\,,\qquad
  \begin{array}{c|c}
   \mathbf{A}_{\{m\}} & \mathbf{U}_{\{m\}} \\[2pt] \hline 
   \mathbf{B}_{\{m\}} & \vspace*{2pt} \mathbf{V}_{\{m\}} 
\end{array}  \,.
\end{equation}

\begin{definition}[Order of partitioned GLM]
Assume that each component of the input vector satisfies \eqref{eqn:GLM-order_assumption}
\begin{equation}
\label{eqn:GLM-order_assumption_partitioned}
 y_{\{m\}\,i}^{[n-1]} = \sum_{k=0}^p q_{\{m\} i,k} h^k \, y_{\{m\}}^{(k)} (t_{n-1}) + \mathcal{O}(h^{p+1}), \quad i = 1,\dots,r\,.
\end{equation}
The partitioned GLM \eqref{eqn:partitoned_glm} has order $p$ if each component of the output vector satisfies
\begin{equation}
\label{eqn:GLM-order_partitioned}
 y_{\{m\}\,i}^{[n]} = \sum_{k=0}^p q_{\{m\}\,i,k} h^k y_{\{m\}}^{(k)} (t_n) + \mathcal{O}(h^{p+1}), \quad i = 1,\dots,r \,, \quad m=1,\dots,N\,,
\end{equation}
for the same parameters $q_{\{m\}i,k}$ as in \eqref{eqn:GLM-order_assumption_partitioned}.
The partitioned GLM  \eqref{eqn:partitoned_glm} has stage order $q$ if each component of the internal stages $Y_i^{[n]}$ satisfies
\begin{equation}
\label{eqn:GLM-stage-order_partitioned}
 Y_{\{m\}\,i}^{[n]} = y_{\{m\}}(t_{n-1}+c\subcom{m}{i} \, h) + \mathcal{O}(h^{q+1}), \quad i = 1,\dots,s\,, \quad m=1,\dots,N \,.
\end{equation}
\end{definition}

\begin{theorem}[Order conditions for partitioned GLMs]
Assume that each component $y_{\{m\}j}^{[n-1]}$ satisfies (\ref{eqn:GLM-order_assumption}). Then the internally consistent partitioned GLM \eqref{eqn:compact-partitioned-glm} 
has order  $p$ \eqref{eqn:GLM-order_partitioned} and
\and stage order $q \in \{p-1,p\}$ \eqref{eqn:GLM-stage-order_partitioned}
if and only if each component method $\left(\mathbf{A}_{\{m\}},\mathbf{B}_{\{m\}},\mathbf{U}_{\{m\}},\mathbf{V}_{\{m\}}\right)$ has order $p$  \eqref{eqn:GLM-order} and stage order $q$ \eqref{eqn:GLM-stage-order}.
\end{theorem}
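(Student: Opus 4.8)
The plan is to reduce the order conditions for the partitioned method to the (already established) order conditions for each component method, exploiting the fact that internal consistency forces a common abscissa vector $\mathbf{c}$. The key observation is that the Taylor-expansion bookkeeping that underlies the scalar GLM order conditions \eqref{eqn:GLM-order_conditions} is entirely ``local'' in the sense that it only involves the coefficient matrices of a single method acting on the vectors $e^{cz}$ and $w(z)$; the nonlinear coupling between components in \eqref{eqn:partitoned_glm_stage}–\eqref{eqn:partitoned_glm_final} enters only through the arguments of $f_{\{m\}}$.

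First I would establish the ``if'' direction. Assume each component method $(\mathbf{A}_{\{m\}},\mathbf{B}_{\{m\}},\mathbf{U}_{\{m\}},\mathbf{V}_{\{m\}})$ has order $p$ and stage order $q\in\{p-1,p\}$, and that the inputs satisfy \eqref{eqn:GLM-order_assumption_partitioned}. Insert the exact solution ansatz $Y_{\{m\}i}=y_{\{m\}}(t_{n-1}+c_ih)+\mathcal{O}(h^{q+1})$ into \eqref{eqn:partitoned_glm_stage}. Because all components share the abscissae, the argument tuple $(Y_{\{1\}j},\dots,Y_{\{N\}j})$ equals $y(t_{n-1}+c_jh)+\mathcal{O}(h^{q+1})$, so by Lipschitz continuity of $f_{\{m\}}$ we get $f_{\{m\}}(Y_{\{1\}j},\dots,Y_{\{N\}j}) = y_{\{m\}}'(t_{n-1}+c_jh)+\mathcal{O}(h^{q+1}) = f_{\{m\}}(y(t_{n-1}+c_jh))+\mathcal{O}(h^{q+1})$. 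At this point the stage equation \eqref{eqn:partitoned_glm_stage} for component $m$ is, up to $\mathcal{O}(h^{q+1})$ (respectively $\mathcal{O}(h^{p+1})$ in the relevant entries), formally identical to the scalar GLM stage equation \eqref{eqn:GLMs} applied to the scalar problem $y_{\{m\}}'=f_{\{m\}}(y(t))$ with the coefficient triple $(\mathbf{A}_{\{m\}},\mathbf{U}_{\{m\}})$ and input $y_{\{m\}}^{[n-1]}$. The scalar order conditions theorem (the GLM order conditions theorem quoted above, together with \eqref{eqn:GLM-order_condition-1-b} in the $q=p-1$ case) then yields \eqref{eqn:GLM-stage-order_partitioned}; feeding this back into \eqref{eqn:partitoned_glm_final} and again matching Taylor coefficients via \eqref{eqn:GLM-order_condition-2} gives \eqref{eqn:GLM-order_partitioned}. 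Since $m$ was arbitrary, the partitioned method has order $p$ and stage order $q$. The ``only if'' direction follows by specialization: given a partitioned method of order $p$ and stage order $q$, apply it to a problem in which $f_{\{m\}}$ depends only on $y_{\{m\}}$ (so the system decouples into $N$ independent scalar GLM integrations); the partitioned order and stage-order conditions then reduce verbatim to the scalar ones for each $(\mathbf{A}_{\{m\}},\mathbf{B}_{\{m\}},\mathbf{U}_{\{m\}},\mathbf{V}_{\{m\}})$, which by the scalar theorem forces each component to have order $p$ and stage order $q$.

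The point I would emphasize — and what makes the theorem worth stating — is the absence of any coupling order conditions. I would make precise why the cross-component terms never generate new constraints: the only place the components interact is in the evaluation $f_{\{m\}}(Y_{\{1\}j},\dots,Y_{\{N\}j})$, and since internal consistency guarantees $(Y_{\{1\}j},\dots,Y_{\{N\}j})$ is a single order-$q$ approximation to $y(t_{n-1}+c_jh)$, this evaluation carries no more and no less error than $f_{\{m\}}$ evaluated at the exact solution at that time point, plus $\mathcal{O}(h^{q+1})$. Contrast this with partitioned Runge–Kutta methods, where low stage order (generically $q=1$) means the stages for different components are only first-order accurate, so the errors do not align and genuine coupling conditions appear; here the high stage order $q\ge p-1$ of DIMSIMs is exactly what kills those terms.

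The main obstacle will be handling the $q=p-1$ case carefully: there the stage vectors are only accurate to $\mathcal{O}(h^p)$, so the leading error term in $Y_{\{m\}i}$ — the quantity $\bigl(\mathbf{c}^p/p! - \mathbf{A}_{\{m\}}\mathbf{c}/(p-1)! - \mathbf{U}_{\{m\}}\mathbf{q}_{\{m\}p}\bigr)h^p$ from \eqref{eqn:GLM-order_condition-1-b} — propagates through $f_{\{m\}}$ into the update \eqref{eqn:partitoned_glm_final}, and one must check that it does not contaminate the order-$p$ output. The resolution is that $f_{\{m\}}$ is pre-multiplied by $h\mathbf{B}_{\{m\}}$, so an $\mathcal{O}(h^p)$ stage error contributes only at $\mathcal{O}(h^{p+1})$ to $y_{\{m\}}^{[n]}$ — exactly as in the scalar DIMSIM analysis — so no new condition on $\mathbf{B}_{\{m\}}$ beyond \eqref{eqn:coeB} is needed. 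I would organize the write-up so that this $h\,\mathbf{B}$ cancellation is stated once and reused, keeping the proof short and transparent rather than repeating the full Butcher/Jackiewicz expansion.
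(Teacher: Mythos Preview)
Your ``if'' direction is essentially the paper's argument: write the residual of the exact solution in the component-$m$ stage equation (which is $\mathcal{O}(h^{q+1})$ by the scalar stage-order condition), subtract from the numerical stage equation, and use the Lipschitz constant of $f_{\{m\}}$ together with the common abscissae to conclude $\|Y-y(t_{n-1}+\mathbf{c}h)\|_\infty=\mathcal{O}(h^{q+1})$; then the extra factor of $h$ in $h\,\mathbf{B}_{\{m\}}f_{\{m\}}(Y_j)$ turns the $\mathcal{O}(h^{q+1})$ stage error into $\mathcal{O}(h^{q+2})\subset\mathcal{O}(h^{p+1})$ in the output, exactly your ``$h\,\mathbf{B}$ cancellation.'' One cosmetic point: your phrasing ``insert the exact solution ansatz $Y_{\{m\}i}=y_{\{m\}}(t_{n-1}+c_ih)+\mathcal{O}(h^{q+1})$'' reads as assuming what you want to prove; in the write-up, state it as a residual/local-truncation computation followed by the Lipschitz contraction (which you already invoke), as the paper does explicitly.

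Your ``only if'' direction differs from the paper's. You specialize the \emph{problem} (take $f_{\{m\}}$ depending only on $y_{\{m\}}$, so the system decouples and each component is integrated by its own GLM in isolation); the paper instead specializes the \emph{method} (set all component tableaux equal to $(\mathbf{A}_{\{m\}},\mathbf{B}_{\{m\}},\mathbf{U}_{\{m\}},\mathbf{V}_{\{m\}})$ and observe that the partitioned scheme collapses to a single traditional GLM). Your route is the cleaner one logically, since the hypothesis is that the \emph{given} partitioned method has order $p$ on all admissible ODEs, and restricting the class of ODEs is the natural way to isolate one component; changing the tableaux, as the paper does, formally alters the method rather than the data and requires an extra word of justification. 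Either way the conclusion is immediate, so this is a minor divergence in the trivial direction.
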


\begin{remark}
Each component method needs to independently meet its own order conditions \eqref{eqn:GLM-order_conditions}.  No additional ``coupling'' conditions are needed for the partitioned GLM (i.e.,
no order conditions contain coefficients from multiple component schemes).
\end{remark} 

\begin{proof}

We first prove the ``only if'' part: if the partitioned GLM satisfies  \eqref{eqn:GLM-order_partitioned}--\eqref{eqn:GLM-stage-order_partitioned} with order $p$ \and stage order $q \in \{p-1,p\}$, then each component method satisfies its own order conditions \eqref{eqn:GLM-order}--\eqref{eqn:GLM-stage-order} with the same $p$ and $q$. This can be seen immediately by employing the same component method for all partitions, $\left(\mathbf{A}_{\{k\}},\mathbf{B}_{\{k\}},\mathbf{U}_{\{k\}},\mathbf{V}_{\{k\}}\right)$ $\equiv$ $\left(\mathbf{A}_{\{m\}},\mathbf{B}_{\{m\}},\mathbf{U}_{\{m\}},\mathbf{V}_{\{m\}}\right)$ for $k=1,\dots,N$.
The partitioned method  \eqref{eqn:compact-partitioned-glm}  is the traditional GLM method $\left(\mathbf{A}_{\{m\}}\right.$, $\mathbf{B}_{\{m\}}$, $\mathbf{U}_{\{m\}}$, $\left. \mathbf{V}_{\{m\}}\right)$ and has to satisfy the traditional order conditions  \eqref{eqn:GLM-order} and \eqref{eqn:GLM-stage-order}.

We next prove the ``if'' part: if each component method satisfies  \eqref{eqn:GLM-order}--\eqref{eqn:GLM-stage-order} with order $p$ \and stage order $q \in \{p-1,p\}$, then the partitioned GLM \eqref{eqn:compact-partitioned-glm}  has order $p$ and stage order $q$. Denote
\[
Y_j = \begin{bmatrix} Y\subcom{1}{j} \\ \vdots \\ Y\subcom{N}{j} \end{bmatrix}\,, \quad
Y = \begin{bmatrix} Y_{1}\\ \vdots \\ Y_{s} \end{bmatrix}\,,
\]
and
\[
y(t_{n-1}+c_{j} h) = \begin{bmatrix}  y\subcom{1}{}(t_{n-1}+c_j h)\\ \vdots \\ y\subcom{N}{}(t_{n-1}+c_j h) \end{bmatrix}\,, \quad
y(t_{n-1}+\mathbf{c} h) = \begin{bmatrix} y(t_{n-1}+c_{1} h)\\ \vdots\\ y(t_{n-1}+c_{s} h) \end{bmatrix}\,.
\]

Consider the stage equations of the individual method $m$ with exact solution arguments
\begin{eqnarray}
\label{eqn:stage-with-exact-solution-full}
y(t_{n-1}+c_{i} h) &=& h\sum_{j=1}^s a_{\{m\}i,j} \, f \left(y(t_{n-1}+c_j h) \right) \\
\nonumber  && + \sum_{j=1}^r u_{\{m\}i,j} \, \left( \sum_{k=0}^p q_{\{m\}\,i,k} h^k y^{(k)} (t_{n-1})  \right) + \mathcal{O}\left(h^{q+1}\right) , ~~ i=1,\dots,s\,. 
\end{eqnarray}
The error size is given by the stage order $q$ of each individual method \eqref{eqn:GLM-stage-order}. 
Using the assumption \eqref{eqn:GLM-order_assumption_partitioned} each component of the sum $\sum_{k=0}^p q_{\{m\}\,i,k} h^k y_{\{m\}}^{(k)} (t_{n-1})$
can be replaced by the numerical approximations $y_{\{m\}j} ^{[n-1]}$, which differ from their exact counterparts by
$\mathcal{O}(h^{p+1})$; therefore their use in \eqref{eqn:stage-with-exact-solution-full} does not change the asymptotical error size.
The $m$-th component of relation \eqref{eqn:stage-with-exact-solution-full} then reads
\begin{eqnarray}
\label{eqn:stage-with-exact-solution}
y_{\{m\}}(t_{n-1}+c_{i} h) &=& h\sum_{j=1}^s a_{\{m\}i,j} \, f_{\{m\}} \left(y(t_{n-1}+c_j h) \right) \\
\nonumber  && + \sum_{j=1}^r u_{\{m\}i,j} \,  y_{\{m\}j}^{[n-1]} + \mathcal{O}\left(h^{q+1}\right) , ~~ i=1,\dots,s\,. 
\end{eqnarray}
Subtracting \eqref{eqn:stage-with-exact-solution} from the stage equation \eqref{eqn:partitoned_glm_stage} gives
\begin{eqnarray*}
&& Y\subcom{m}{i}-y_{\{m\}}(t_{n-1}+c_{i} h) = h\sum_{j=1}^s a_{\{m\}i,j} \, 
\left( f_{\{m\}} (Y_j ) - f_{\{m\}} (y(t_{n-1}+c_{j} h)) \right) + \mathcal{O}(h^{q+1}) 
\end{eqnarray*}
and therefore
\begin{eqnarray*}
\left\| Y\subcom{m}{i}-y_{\{m\}}(t_{n-1}+c_{i} h) \right\|_\infty 
&\le& h \left\| \mathbf{A}_{\{m\}} \right\|_\infty \, L_m\, \left\| Y-y(t_{n-1}+\mathbf{c} h) \right\|_\infty + \mathcal{O}(h^{q+1}) 
\end{eqnarray*}
where $L_m$ is the Lipschitz constant of $f_{\{m\}}$. It follows that \cite{Jackiewicz_2009_book}
\begin{equation}
\label{eqn:stage-error-bound}
\left\| Y-y(t_{n-1}+\mathbf{c} h) \right\|_\infty = \mathcal{O}(h^{q+1})
\end{equation}
for all sufficiently small step sizes 
\[
h < \tau =  \left( \max_m \left\| \mathbf{A}_{\{m\}} \right\|_\infty \, L_m \right)^{-1}\,.
\]
Equation \eqref{eqn:stage-error-bound} proves the stage order of the partitioned GLM method.

Continuing,  \eqref{eqn:stage-error-bound} implies that
\begin{eqnarray}
\nonumber
h\, f_{\{m\}}\left(Y_{i}\right)  &=& h\, f_{\{m\}}\left( y(t_{n-1}+c_i h)  \right)  +\mathcal{O}(h^{q+2}) \,, \\
\label{eqn:stage-order-f}
&=& h\, f_{\{m\}}\left( y(t_{n-1}+c_i h)  \right)  +\mathcal{O}(h^{p+1}) \,
\end{eqnarray}
where we have used the fact that $q+2 \ge p+1$. 
Consider the solution step of the individual method $m$ with exact solution arguments
\begin{eqnarray}
\label{eqn:help-4}
\sum_{k=0}^p q_{\{m\}\,i,k} h^k y^{(k)} (t_n) &=& h\sum_{j=1}^s b_{\{m\}i,j} \, f \left(\, y(t_{n-1}+c_j h )\, \right) \\
\nonumber  && + \sum_{j=1}^r v_{\{m\}i,j} \,  \left( \sum_{k=0}^p q_{\{m\}\,i,k} h^k y^{(k)} (t_{n-1})  \right) + \mathcal{O}\left(h^{p+1}\right)
\end{eqnarray}
for $i=1,\dots,r$,
where the size of the error term reflects the fact that each individual method has order $p$.

Use \eqref{eqn:stage-order-f} and the assumption \eqref{eqn:GLM-order_assumption_partitioned} into the $m$-th component of \eqref{eqn:help-4}
to obtain
\begin{eqnarray}
\label{eqn:help-5}
\sum_{k=0}^p q_{\{m\}\,i,k} h^k y^{(k)} (t_n) &=& h\sum_{j=1}^s b_{\{m\}i,j} \, f \left(\, Y_j )\, \right) \\
\nonumber  && + \sum_{j=1}^r v_{\{m\}i,j} \,  y_{\{m\}\,j}^{[n-1]}+ \mathcal{O}\left(h^{p+1}\right)\,, \\
&=&  y_{\{m\}i}^{[n]} + \mathcal{O}\left(h^{p+1}\right)\, \quad i=1,\dots,r,
\end{eqnarray}
The last equality follows from the partitioned method solution equation \eqref{eqn:partitoned_glm_final}.
This  establishes the order $p$ of the partitioned GLM.

\end{proof}

\section{Implicit-explicit general linear methods}\label{sec:IMEX-schemes}


\subsection{Construction procedure}\label{sec:IMEX-construction}

The derivation of IMEX-GLM schemes relies on the partitioned GLM theory 
developed in Section \ref{sec:partitioned-GLM}. 
We transform the additively partitioned system \eqref{eqn:ode_imex} into a component partitioned system \eqref{eqn:partitioned-ode}
via the following transformation \cite{Ascher_1997} 
\begin{subequations}
\begin{eqnarray}
\nonumber
y &=& x+z\,, \\
\label{eqn:nonstiff}
x' &=& \tilde{f}(x,z) = f(x+z) \,,\\
\label{eqn:stiff}
z' &=& \tilde{g}(x,z) = g(x+z)\,.
\end{eqnarray}
\end{subequations}
Equation (\ref{eqn:nonstiff}) is discretized with an explicit (type 1) GLM
\begin{subequations}
\label{eqn:ex_glm}
\begin{eqnarray}
X_i &=& h\sum_{j=1}^{i-1} {a}_{i,j} \, f(X_j+Z_j)+  \sum_{j=1}^r u_{i,j}\, x_j^{[n-1]}, \quad i=1,\dots,s,~\\
x_i^{[n]} &=& h\sum_{j=1}^s {b}_{i,j} \, f(X_j+Z_j) + \sum_{j=1}^r {v}_{i,j} \, x_j^{[n-1]}, \quad i=1,\dots,r\,.
\end{eqnarray}
\end{subequations}
Similarly, equation (\ref{eqn:stiff}) is discretized with an implicit (type 2)  GLM
\begin{subequations}
\label{eqn:im_glm}
\begin{eqnarray}
 & & Z_i = h\sum_{j=1}^{i} \widehat{a}_{i,j} \, g(X_j+Z_j)+   \sum_{j=1}^r \widehat{u}_{i,j}\, z_j^{[n-1]}, \; i=1,\dots,s,~ \\
& & z_i^{[n]}= h\sum_{j=1}^s \widehat{b}_{i,j} \, g(X_j+Z_j) + \sum_{j=1}^r \widehat{v}_{i,j} \, z_j^{[n-1]}, \; i=1,\dots,r.~
\end{eqnarray}
\end{subequations}
Combining (\ref{eqn:ex_glm}) and (\ref{eqn:im_glm}) we obtain
\begin{subequations}
\label{eqn:com_glm}
\begin{eqnarray}
 X_i+Z_i  &=&  h \left( \sum_{j=1}^{i-1} {a}_{i,j} \, f(X_j+Z_j) + \sum_{j=1}^{i} \widehat{a}_{i,j} \, g(X_j+Z_j)  \right)  \\
 \nonumber && +  \sum_{j=1}^r \left( u_{i,j}\,  x_j^{[n-1]} + \widehat{u}_{i,j}\, z_j^{[n-1]} \right) \,, ~~ i=1,\dots,s,~ \\
 x_i^{[n]} + z_i^{[n]}  &=&  h \left( \sum_{j=1}^s {b}_{i,j} \, f(X_j+Z_j)  +  \sum_{j=1}^s \widehat{b}_{i,j} \, g(X_j+Z_j) \right)  \\
 \nonumber && + \sum_{j=1}^r \left( {v}_{i,j} \, x_j^{[n-1]}  + \widehat{v}_{i,j} \, z_j^{[n-1]} \right)\,, ~~ i=1,\dots,r,~
\end{eqnarray}
\end{subequations}
We consider pairs of explicit \eqref{eqn:ex_glm} and implicit \eqref{eqn:im_glm} schemes that 
\begin{itemize}
\item share the same abscissa vector $\mathbf{c}=\widehat{\mathbf{c}}$ such that the partitioned GLM is internally consistent, and
\item share the same coefficient matrices $\mathbf{U}=\widehat{\mathbf{U}}$ and $\mathbf{V}=\widehat{\mathbf{V}}$.
\end{itemize}
For this class of schemes all internal stage vectors can be combined.
Specifically, let $Y_i=X_i+Z_i$ and $y_i = x_i+z_i$. The scheme \eqref{eqn:com_glm} becomes the following method/

\begin{definition}[IMEX-GLM methods]
One step of an implicit-explicit general linear method applied to \eqref{eqn:ode_imex} advances the solution using
\begin{subequations}
\label{eqn:imex_glm}
\begin{eqnarray}
Y_i  &=&  h  \sum_{j=1}^{i-1} {a}_{i,j} \, f(Y_j) + h  \sum_{j=1}^{i} \widehat{a}_{i,j} \, g(Y_j)   + \sum_{j=1}^r u_{i,j} \,   y_j^{[n-1]} \,, ~~i=1,\dots,s,~ \\
y_i^{[n]}  &=&   h\sum_{j=1}^s  \left(   {b}_{i,j} \, f(Y_j) +  \widehat{b}_{i,j} \, g(Y_j)  \right)  +  \sum_{j=1}^r v_{i,j} \, y_j^{[n-1]}  \,, ~~ i=1,\dots,r\,.
\end{eqnarray}
\end{subequations}
\end{definition}
We note that in (\ref{eqn:imex_glm}) $x_i^{[n]}$ and $z_i^{[n]}$ need not to be known individually once they are initialized ine the first step.  The combined solution $y_i^{[n]}=x_i^{[n]}+z_i^{[n]}$ 
is advanced at each step as regular GLMs do. 
The IMEX-GLM \eqref{eqn:imex_glm} is represented compactly by the Butcher tableau
\begin{equation}
\label{eqn:imex_glm_tableau}
  \begin{array}{c|c|c|c}
\mathbf{c} &   \mathbf{A} & \widehat{\mathbf{A}} & \mathbf{U} \\ \hline 
   & \mathbf{B} & \vspace{2pt} \widehat{\mathbf{B}} & \mathbf{V}
\end{array}\,.
\end{equation}
%

\subsection{Starting procedures}\label{sec:IMEX-starting}
An IMEX GLM \eqref{eqn:imex_glm} of order $p$ requires a starting procedure that approximates linear combinations of derivatives as follows
\begin{equation}
\label{eqn:initialization-formula}
x_i^{[0]} = \sum_{k=0}^r {q}_{i,k} h^k x^{(k)}(t_0)  + \mathcal{O}(h^{p}) \quad \textnormal{and} \quad 
z_i^{[0]} = \sum_{k=0}^r \widehat{q}_{i,k} h^k z^{(k)}(t_0) + \mathcal{O}(h^{p})
\end{equation}
respectively, where 
\begin{eqnarray}
\label{eqn:initial-weights}
{q}_0 =  \mathbf{1}_s, \quad {q}_i = \frac{\mathbf{c}^i}{i!} - \frac{{\mathbf{A}}\, \mathbf{c}^{i-1}}{(i-1)!}\, ; \quad 
 \widehat{q}_0 =  \mathbf{1}_s, \quad \widehat{q}_i = \frac{\mathbf{c}^i}{i!} - \frac{\widehat{\mathbf{A}}\, \mathbf{c}^{i-1}}{(i-1)!} .
\end{eqnarray}
Thus
\begin{eqnarray*}
\label{eqn:xpy}
y_i^{[0]} &=& x_i^{[0]}+z_i^{[0]} \\
&=& x(t_0)+z(t_0) + {q}_{i,1} h x'(t_0) + \widehat{q}_{i,1} h z'(t_0) \\
&& + \sum_{k=2}^{r} {q}_{i,k} h^k x^{(k)}(t_0) + \sum_{k=2}^{r} \widehat{q}_{i,k} h^k z^{(k)}(t_0) \\
&=& y_0 + {q}_{i,1} h f(y_0) + \widehat{q}_{i,1} h g(y_0) \\
&& + \sum_{k=2}^{r} {q}_{i,k} h^k x^{(k)}(t_0) + \sum_{k=2}^{r} \widehat{q}_{i,k} h^k z^{(k)}(t_0).
\end{eqnarray*}
Evaluation of the first three terms is straightforward. 
But approximations of the other terms containing derivatives $x^{(k)}(t_0)$ and $y^{(k)}(t_0)$ for $k \ge 2$
requires additional work if their analytical expressions are difficult to obtain.  

To initialize an IMEX GLM we approximate {\it independently} the vectors $h^k x^{(k)}(t_0)$, $h^k z^{(k)}(t_0)$, $k=1,\dots,r$, 
using finite differences and the solution information provided by several steps of an IMEX Runge-Kutta method. 

%
For better accuracy, the IMEX RK method uses a small step size $\tau < h$, and produces the numerical solutions $y^{\rm start}_i \approx y(t_0 + i\tau)$. 
In the following we show how to compute the terms $\tau^k x^{(k)}(t_0)$; each of these terms is then rescaled by $(h/\tau)^k$ to reflect the integration step $h$.  
We have that
\begin{equation}
\vect{\tau x'(t_0) \\ \tau^2 x''(t_0)\\ \vdots \\ \tau^r x^{(r)}(t_0)} =  \tau \mathbf{D} \vect{x'(t_0) \\ x'(t_1)\\ \vdots \\ x'(t_r)} + \mathcal{O}(\tau^{r+1}) = \tau \mathbf{D} \vect{f(y_0) \\ f\left(y^{\rm start}_1\right)\\ \vdots \\ f\left(y^{\rm start}_r\right)} + \mathcal{O}(\tau^{r+1})
\end{equation}
where the coefficient matrix $D \in \mathbb{R}^{r \times r}$ is derived by expanding the right hand side in Taylor series and comparing the coefficients of each term. 
For the cases $r=2$ and $r=3$ the coefficients are
\begin{displaymath}
 \mathbf{D}_{(r=2)}=\left[
\begin{tabular}{c c}
   $1$ & $0$  \\ 
   $-1$ & $1$ 
\end{tabular}
\right]\quad \textnormal{and} \quad
 \mathbf{D}_{(r=3)}=\left[
\begin{tabular}{c c c}
   $1$ & $0$ & $0$  \\ 
   $-3/2$ & $2$ & $-1/2$ \\
   $1$ & $-2$ & $1$
\end{tabular}
\right]\,,
\end{displaymath}
respectively.
The same procedure is applied to obtain $\tau^k z^{(k)}(t_0)$. We note that the initialization procedure requires the function values $f(y)$ and $g(y)$ 
evaluated at the starting solution steps $y^{\rm start}_i$, and that there is no need to compute $x_i$ or $z_i$ separately.

\subsection{Termination procedures}\label{sec:IMEX-termination}

To generate the solution at the last time step $y(t_F)$ using (\ref{eqn:solution}) a general termination procedure reads 
\begin{subequations}
\label{eqn:imex_glm_termination}
\begin{eqnarray}
\label{eqn:imex_glm_termination_full}
y(t_n) &\approx& \sum_{i=0}^s \beta_{0,i} f(Y_i) + \sum_{j=0}^r \gamma_{0,j} x_j^{[n-1]} \\
\nonumber
 && + \sum_{i=0}^s \widehat{\beta}_{0,i} g(Y_i) + \sum_{j=0}^r \widehat{\gamma}_{0,j} z_j^{[n-1]} \,.
\end{eqnarray}
In order to avoid separate evaluations of $x_j^{[n-1]}$ and $z_j^{[n-1]}$ we require that $\gamma_{0,j}=\widehat{\gamma}_{0,j}$ for all $j$.
In this case the termination procedure reads
\begin{eqnarray}
\label{eqn:imex_glm_termination_reduced}
y(t_n) &\approx& \sum_{i=0}^s \beta_{0,i} f(Y_i)  + \sum_{i=0}^s \widehat{\beta}_{0,i} g(Y_i) + \sum_{j=0}^r \gamma_{0,j} y_j^{[n-1]}  \,.
\end{eqnarray}
\end{subequations}

For explicit (type 1) GLMs, choosing the first abscissa coordinate $c_1 = 0$ implies that  $q_{1,0} = 1$ and $q_{1,i} =0 $ for $ i \ge 1$ due to order conditions.
The first element of the output vector is exactly the solution at the current step, $y^{[n]}_1 \approx y(t_n)$. 
In this case, $\beta_0$ is equal to the first row of the coefficient matrix $\mathbf{B}$, and $\gamma_0$ is the first row of $\mathbf{V}$. 

For implicit (type 2) GLMs, there are usually sufficiently many free parameters in $\widetilde{\mathbf{B}}$ and $\widetilde{\mathbf{V}}$ that remain after satisfying (\ref{eqn:coeB}). 
These free parameters could be chosen in such a way that the implicit GLM shares the same coefficients $\gamma_0$ with the explicit GLM.
The difficulty of computing terms $x_j^{[n-1]}$ and $z_j^{[n-1]}$ individually can therefore be avoided. 

\subsection{Linear stability analysis}\label{sec:IMEX-stability}

For convenience, we write the IMEX-GLM (\ref{eqn:imex_glm}) in the vector form
\begin{subequations}
\label{eqn:IMEX_DIMSIM_VECTOR}
\begin{eqnarray}
Y &=& h \mathbf{A} F(Y) + h \widehat{\mathbf{A}} G(Y) + \mathbf{U}\, y^{[n-1]} \\ 
y^{[n]} &=& h \mathbf{B} F(Y) + h \widehat{\mathbf{B}} G(Y) + \mathbf{V}\, y^{[n-1]}\,.
\end{eqnarray}
\end{subequations}
We consider the generalized linear test equation
\begin{equation}
\label{eqn:test-equation}
 y'= \xi y + \widehat{\xi} y\,, \quad t \geq 0,
\end{equation}
where $\xi$ and $\widehat{\xi}$ are complex numbers. We consider $\xi y$ to be the nonstiff term and $\widehat{\xi} y$ the stiff term, and denote
 $w=h \xi$ and $\widehat{w}=h \widehat{\xi}$.

Applying (\ref{eqn:IMEX_DIMSIM_VECTOR}) to the test equation \eqref{eqn:test-equation} leads to
\begin{subequations}
 \begin{eqnarray}
 Y &=& h \left( \xi \mathbf{A} + \widehat{\xi} \widehat{\mathbf{A}} \right) Y +  \mathbf{U} \, y^{[n-1]}, \\
   y^{[n]} &=& h \left( \xi \mathbf{B} + \widehat{\xi} \widehat{\mathbf{B}} \right)  Y + \mathbf{V} \, y^{[n-1]} \,.
 \end{eqnarray}
\end{subequations}
Assuming $\mathbf{I}_{s \times s}- w \mathbf{A} - \widehat{w} \widehat{\mathbf{A}}$ is nonsingular we obtain 
\begin{displaymath}
 y^{[n]} = \mathbf{M}(w,\widehat{w})\, y^{[n-1]},
\end{displaymath}
where  the stability matrix is defined by
\begin{equation}
 \mathbf{M}(w,\widehat{w}) = \mathbf{V} + \left( w\, \mathbf{B}+ \widehat{w}\, \widehat{\mathbf{B}} \right) \left(\mathbf{I}_{s \times s}-w\, \mathbf{A} - \widehat{w}\,\widehat{\mathbf{A}}\right)^{-1}
 \, \mathbf{U}\,.
\end{equation}
Let $S\subset \mathbb{C}$ and $\widehat{S}\subset \mathbb{C}$ be the stability regions of the explicit GLM and of the implicit GLM, respectively.
The {\it combined stability region} is defined by
\begin{equation}
 \label{eqn:stab_region}
\left\{\, w \in S, \, \widehat{w} \in \widehat{S}~:~\rho\bigl( \mathbf{M}(w,\widehat{w}) \bigr) \le 1 \, \right\} \subset S \times \widehat{S} \subset \mathbb{C} \times \mathbb{C}\,.
\end{equation}
For a practical analysis of stability we define a {\it desired stiff stability region}, e.g.,
\[
\widehat{\mathcal{S}}_\alpha  = \{ \widehat{w} \in \widehat{S} \cap \mathbbm{C}^-~:~ |\mbox{Im}(\widehat{w})| \le \tan(\alpha)\, |\mbox{Re}(\widehat{w})| \}\,,
\]
and compute numerically the corresponding non-stiff stability region:
\begin{equation}
\label{eqn:stability-constrained}
\mathcal{S}_\alpha = \left\{ w \in S ~:~ \rho\bigl( \mathbf{M}(w,\widehat{w}) \bigr) \le 1\,,~~\forall\, \widehat{w} \in \widehat{\mathcal{S}}_\alpha \right\}\,.
\end{equation}
The IMEX-GLM method is stable if the constrained non-stiff stability region $\mathcal{S}_\alpha$ is non-trivial (has a non-empty interior) and is sufficiently
large for a prescribed (problem-dependent) value of $\alpha$, e.g., $\alpha = 90^\circ$.

\subsection{Prothero-Robinson convergence}\label{sec:PR-convergence}

We now study the possible order reduction for very stiff systems. 
We consider the Prothero-Robinson (PR) \cite{Prothero_1974} test problem written as a split system \eqref{eqn:ode_imex} 
\begin{equation}
y' = \underbrace{ \mu\, (y - \phi(t)) }_{g(y)} + \underbrace{\phi'(t) }_{f(y)} ~, \quad \mu < 0~, \quad y(0)=\phi(0)~,
\label{Prothero-Robinson}
\end{equation}
where the exact solution is $y(t)=\phi(t)$. A numerical method is said to be PR-convergent with order $p$
if its application to (\ref{Prothero-Robinson}) gives a solution whose the global error decreases as $\mathcal{O}(h^p)$
for $h \rightarrow 0$ and $h\mu \rightarrow -\infty$.

\begin{theorem}[Prothero-Robinson convergence of IMEX-GLM]
\label{thm:PR-convergence}
Consider the IMEX GLM method (\ref{eqn:imex_glm}). Without loss of generality we consider that $\mathbf{U}=\mathbf{I}$.
The explicit part is of order $p$ and stage order $q \in \{ p-1,p\}$, 
and the implicit part has order $\widehat{p}=p$ and stage order $\widehat{q} \in \{ p-1,p\}$. 
Assume that 
$h\mu \in \widehat{S}$ for all $h > 0$.
Then the IMEX GLM method (\ref{eqn:imex_glm}) is PR-convergent with order $\min(p,q)$.
\end{theorem}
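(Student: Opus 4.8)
The plan is to apply the IMEX-GLM to the Prothero--Robinson problem \eqref{Prothero-Robinson}, track the propagation of the stage and output errors, and show that the stiff contributions are controlled in the limit $h\mu \to -\infty$ so that the global error is governed by the weaker of the explicit order $p$ and explicit stage order $q$. First I would insert the exact solution $y(t) = \phi(t)$ into the scheme \eqref{eqn:imex_glm}. Since $g(\phi(t)) = 0$ along the exact solution and $f(\phi(t)) = \phi'(t)$, the defect of the stage equations and of the output equation is, up to the appropriate order, exactly the defect of the underlying component GLMs applied to a smooth problem. Using the order conditions from the theorem on GLM order conditions, the stage residuals are $\mathcal{O}(h^{\min(q,\widehat q)+1}) = \mathcal{O}(h^{q+1})$ (the explicit stage order is the binding one when $q\le\widehat q$, and the statement's conclusion allows for $q$ being the smaller index), and the output residuals are $\mathcal{O}(h^{p+1})$.

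Next I would write the error recursion. Let $\Delta Y_i = Y_i - \phi(t_{n-1}+c_i h)$ and $\varepsilon^{[n]} = y^{[n]} - \sum_k q_{\cdot,k} h^k \phi^{(k)}(t_n)$ (with the combined weights built from $q$ and $\widehat q$ as in \eqref{eqn:xpy}). Because $f$ is linear with a bounded (order-1 in $h\mu$) coefficient and $g(y) = \mu(y-\phi(t))$ is linear with coefficient $\mu$, subtracting the exact-solution relations from \eqref{eqn:imex_glm} gives a linear system for $\Delta Y$ of the form $(\mathbf{I} - \widehat w\,\widehat{\mathbf{A}} - w\,\mathbf{A})\,\Delta Y = \mathbf{U}\,\varepsilon^{[n-1]} + (\text{stage defect})$, where $w = h\xi$ is bounded and $\widehat w = h\mu \to -\infty$. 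The key algebraic step is to show that $(\mathbf{I} - \widehat w\,\widehat{\mathbf{A}} - w\,\mathbf{A})^{-1}$ stays bounded and, more precisely, that $\widehat w\,(\mathbf{I} - \widehat w\,\widehat{\mathbf{A}} - w\,\mathbf{A})^{-1}$ is bounded as $\widehat w \to -\infty$; this follows because $\widehat{\mathbf{A}}$ is lower triangular with constant positive diagonal $\lambda$ (type-2 DIMSIM) so $\widehat{\mathbf{A}}$ is invertible, and hence $\widehat w\,(\mathbf{I}-\widehat w\widehat{\mathbf{A}}-w\mathbf{A})^{-1} \to -\widehat{\mathbf{A}}^{-1}$. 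Feeding this back, the output error satisfies $\varepsilon^{[n]} = \mathbf{M}(w,\widehat w)\,\varepsilon^{[n-1]} + \mathcal{O}(h^{p+1}) + \mathcal{O}(h\cdot h^{q+1})$, where the last term comes from the stage defect being multiplied by $h\mathbf{B}F + h\widehat{\mathbf{B}}G$ and the $\widehat{\mathbf{B}}$-term picking up the bounded factor $\widehat w$; the $h^{q+2}$ local contribution accumulates to $\mathcal{O}(h^{q+1})$ globally.

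To close the recursion I would use the hypothesis $h\mu \in \widehat S$ for all $h>0$, which (together with the explicit part contributing only a bounded perturbation $w$) guarantees $\rho(\mathbf{M}(w,\widehat w)) \le 1$ uniformly, so the discrete Gronwall / stability-of-the-recursion argument (as in Jackiewicz's book) gives a global error bound $\|\varepsilon^{[n]}\| = \mathcal{O}(h^{\min(p,q+1)}) + (\text{starting error})$. Since the starting procedure is assumed accurate to the method's order and since $\min(p, q+1) = p$ when $q=p$ and equals $q+1 > q$ when $q = p-1$, in all cases the global solution error is $\mathcal{O}(h^{\min(p,q)})$ after converting from the error in the combined vector $y^{[n]}$ back to the error in the numerical solution via the termination formula \eqref{eqn:imex_glm_termination_reduced}, whose coefficients also satisfy the corresponding order relation \eqref{eqn:BV}.

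\textbf{Main obstacle.} The delicate point is the uniformity in $\widehat w = h\mu$: I must verify not just that the amplification matrix is power-bounded for each fixed $\widehat w\in\widehat S$, but that the constants in the error estimates do not blow up as $\widehat w \to -\infty$. This requires the boundedness of $(\mathbf{I}-\widehat w\widehat{\mathbf{A}}-w\mathbf{A})^{-1}$ \emph{and} of $\widehat w (\mathbf{I}-\widehat w\widehat{\mathbf{A}}-w\mathbf{A})^{-1}$, which in turn relies essentially on the DIMSIM structure of $\widehat{\mathbf{A}}$ (lower triangular, nonsingular) — this is where the type-2 assumption is really used, and is the step I would write out most carefully.
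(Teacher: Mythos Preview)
Your overall strategy---insert the exact solution into the scheme, derive an error recursion, and close it with the stability assumption---matches the paper's. But there is a genuine conceptual slip that propagates through your whole argument.

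On the Prothero--Robinson problem \eqref{Prothero-Robinson} the nonstiff term is $f(t,y)=\phi'(t)$, which does \emph{not} depend on $y$. Hence when you subtract the exact-solution relations from the scheme, the $f$-terms cancel completely: there is no ``$w\mathbf{A}$'' contribution and no $w=h\xi$ at all. The stage-error equation is simply
\[
\bigl(\mathbf{I}-h\mu\,\widehat{\mathbf{A}}\bigr)\,E^{[n]} \;=\; e_{n-1} + \mathcal{O}\!\left(h^{q+1}\right),
\]
and the amplification matrix in the output recursion is the implicit-method stability matrix $\widehat{\mathbf{M}}(h\mu)=\mathbf{V}+h\mu\,\widehat{\mathbf{B}}(\mathbf{I}-h\mu\widehat{\mathbf{A}})^{-1}\mathbf{U}$, not the combined $\mathbf{M}(w,\widehat w)$. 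This is precisely why the hypothesis $h\mu\in\widehat S$ suffices: power-boundedness of $\widehat{\mathbf{M}}(h\mu)$ is immediate, and your ``main obstacle'' (uniform control of $(\mathbf{I}-\widehat w\widehat{\mathbf{A}}-w\mathbf{A})^{-1}$ in two variables) evaporates.

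A second, related simplification you miss: since the exact stiff component is $z(t)\equiv 0$, the error $e_n=y^{[n]}-\sum_k \mathbf{q}_k h^k\phi^{(k)}(t_n)$ involves only the \emph{explicit} weights $\mathbf{q}_k$, not a combination of $\mathbf{q}_k$ and $\widehat{\mathbf{q}}_k$. Consequently the residuals are governed entirely by the explicit stage order condition \eqref{eqn:GLM-order_condition-1} and the explicit output condition \eqref{eqn:GLM-order_condition-2}; the implicit stage order $\widehat q$ never enters.

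Finally, your local-error bookkeeping is off by one power of $h$: the stage defect $\mathcal{O}(h^{q+1})$ reaches the output through $h\mu\,\widehat{\mathbf{B}}(\mathbf{I}-h\mu\widehat{\mathbf{A}})^{-1}$, which is $\mathcal{O}(1)$ (not $\mathcal{O}(h)$) uniformly in $h\mu$. So the local error is $\mathcal{O}(h^{\min(p+1,\,q+1)})$, giving global order $\min(p,q)$ directly---no termination-formula argument is needed to drop from $\min(p,q+1)$ to $\min(p,q)$.
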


\begin{remark}
If the explicit stage order is $q = p$, then the PR order of convergence is $p$. It is convenient
to construct IMEX GLM methods (\ref{eqn:imex_glm}) with explicit stage order $q=p$, even if $\widehat{q}=p-1$,
as such methods do not suffer from stiff order reduction on the PR problem.
\end{remark}

\begin{proof}
Let
\[
\phi^{[n]} =  \phi\left(t_{n-1} + \mathbf{c}\, h \right) = \left[  \phi(t_{n-1} +c_1\, h ),\ldots,\phi(t_{n-1} + c_s\, h ) \right]^T\,.
\]
and
\[
\psi^{[n]} =   \left[  \phi(t_{n-1}),h\, \phi'(t_{n-1}),\ldots,h^p\, \phi^{(p)}(t_{n-1}) \right]^T\,.
\]
The method \eqref{eqn:imex_glm} applied to (\ref{Prothero-Robinson}) reads:
\begin{subequations}
\label{eqn:imex_glm_PR}
\begin{eqnarray}
\label{eqn:imex_glm_PR_stage}
 Y^{[n]} &=&  h \,{\mathbf{A}} \, \phi'^{[n]}  + h\, \mu\, \widehat{\mathbf{A}} \,  \left(Y^{[n]} - \phi^{[n]} \right)  + \mathbf{U}\,  y^{[n-1]} \,,  \\
\label{eqn:imex_glm_PR_final}
y^{[n]}  &=&   h \, {\mathbf{B}}\,  \phi'^{[n]}  + h\, \mu\, \widehat{\mathbf{B}} \,  \left(Y^{[n]} - \phi^{[n]} \right)  +  \mathbf{V} \, y^{[n-1]}\,.
\end{eqnarray}
\end{subequations}
Consider the global stage errors
\begin{eqnarray*}
E^{[n]} &=& Y^{[n]} - \phi^{[n]}\,.
\end{eqnarray*}
To obtain the global error in $y^{[n]}$ we consider separately the global errors in the nonstiff and stiff components:
\begin{eqnarray*}
e^{\rm nonstiff}_n &=& x^{[n]} - \sum \mathbf{q}_k \, h^k\, x^{(k)}(t_n)\,, \\
e^{\rm stiff}_n &=& z^{[n]} - \sum \widehat{\mathbf{q}}_k \, h^k\, z^{(k)}(t_n)~, \\
 &=& \phi^{[n]}-x^{[n]} - \sum \widehat{\mathbf{q}}_k \, h^k\, \left( \phi^{(k)} - x^{(k)} \right) (t_n) \\
 &=& \phi^{[n]}-x^{[n]} 
\end{eqnarray*}
since the exact solution of the nonstiff system is $x(t) = \phi(t)$.
Consequently, the total error is
\begin{eqnarray*}
e_n &=& e^{\rm nonstiff}_n + e^{\rm stiff}_n \\
&=& \phi^{[n]}  - \sum \mathbf{q}_k \, h^k\, \phi^{(k)}(t_n) \\
&=& \phi^{[n]} - \mathbf{W}\, \psi^{[n]}\,.
\end{eqnarray*}
Write the stage equation \eqref{eqn:imex_glm_PR_stage} in terms of the exact solution and global errors
\begin{eqnarray*}
 E^{[n]} + \phi^{[n]} &=&  h \,\mathbf{A} \, \phi'^{[n]}  + h\, \mu\, \widehat{\mathbf{A}} \, E^{[n]} + e_{n-1} + \mathbf{U}\,\sum_{k=0}^p \mathbf{q}_k \, h^k\, \phi^{(k)}(t_{n-1}) \,,  
\end{eqnarray*}
to obtain
\begin{eqnarray}
\label{eq:imex_glm-stage-error}
\left( \mathbf{I}_{s \times s} - h\,\mu\, \widehat{\mathbf{A}} \right)\, E^{[n]}&=& e_{n-1} 
+h\, \mathbf{A}\, \phi'\left(t_{n-1} + \mathbf{c}\, h \right) \\
\nonumber
&& \displaystyle + \mathbf{U}\, \sum_{k=0}^p \mathbf{q}_k \, h^k\, \phi^{(k)}(t_{n-1}) - \phi(t_{n-1}+\mathbf{c} h)\,.
 \end{eqnarray}
The exact solution is expanded in Taylor series about $t_{n-1}$:
\begin{eqnarray*}
\phi\left(t_{n-1} + \mathbf{c}\, h \right)-\mathbf{1}_s\,\phi(t_{n-1}) &=& \sum_{k=1}^\infty \frac{h^k \mathbf{c}^k }{k!}\phi^{(k)}(t_{n-1}) \,,\\
h\, \phi'\left(t_{n-1} + \mathbf{c}\, h \right) &=& \sum_{k=1}^\infty \frac{k h^k \mathbf{c}^{k-1} }{k!}\phi^{(k)}(t_{n-1}) \,.
\end{eqnarray*}
Inserting the above Taylor expansions in \eqref{eq:imex_glm-stage-error} leads to
\begin{eqnarray*}
\left( \mathbf{I}_{s \times s} - h\,\mu\, \widehat{\mathbf{A}} \right)\, E^{[n]}&=& e_{n-1} -\mathbf{1}_s\,\phi(t_{n-1}) + \mathbf{U}\mathbf{q}_0\,\phi(t_{n-1}) \notag\\
&&+\sum_{k=1}^\infty \left( k\, \mathbf{A}\,\mathbf{c}^{k-1} + k!\, \mathbf{U}\,\mathbf{q}_k - \mathbf{c}^k \right)\, \frac{h^k }{k!}\phi^{(k)}(t_{n-1})\notag\\
&=& e_{n-1} + {\cal O}\left(h^{q+1}\right)
\end{eqnarray*}
where $q$ is the stage order of the explicit method. 
We have used the facts that $\mathbf{q}_0=\mathbf{1}_s$, $ \mathbf{U}\,\mathbf{1}_s=\mathbf{1}_s$, and the order conditions \eqref{eqn:GLM-order_condition-1}
and \eqref{eqn:GLM-order_condition-1-b} for the cases where $q=p$ and $q=p-1$, respectively.

Similarly, we write the solution equation \eqref{eqn:imex_glm_PR_final} in terms of the exact solution and global errors:
\begin{eqnarray*}
e_n + \sum_{k=0}^p \mathbf{q}_k \, h^k\, \phi^{(k)}(t_n)   &=&   h \, \mathbf{B}\,  \phi'(t_{n-1} + \mathbf{c}h)  + h\, \mu\, \widehat{\mathbf{B}} \,  E^{[n]}  +  \mathbf{V} \, e_{[n-1]} \\
&& + \mathbf{V} \sum_{k=0}^p \mathbf{q}_k \, h^k\, \phi^{(k)}(t_{n-1})  \,.
\end{eqnarray*}
After rearranging the expression we obtain
\begin{eqnarray*}
e_n  &=&  \left( h\, \mu\, \widehat{\mathbf{B}} \, \left( \mathbf{I}_{s \times s} - h\,\mu\, \widehat{\mathbf{A}} \right)^{-1} + \mathbf{V} \right)\, e_{n-1}  \\
&& + h \, \mathbf{B}\,  \phi'(t_{n-1} + \mathbf{c}h)  + \mathbf{V} \sum_{k=0}^p \mathbf{q}_k \, h^k\, \phi^{(k)}(t_{n-1}) \\
&& - \sum_{k=0}^p \mathbf{q}_k \, h^k\, \phi^{(k)}(t_n)  + {\cal O}\left(h^{q+1}\right) \,.
\end{eqnarray*}
By Taylor series expansion we have
\[
\sum_{k=0}^p \mathbf{q}_k \, h^k\, \phi^{(k)}(t_n) = \sum_{k=0}^p \left(  \sum_{\ell=0}^k \frac{\mathbf{q}_{k-\ell}}{\ell!} \right) h^k \, \phi^{(k)}(t_{n-1})
\]
and therefore
\begin{eqnarray}
e_n 
&=&\widehat{\mathbf{M}}(h\mu) \, e_{n-1}  \notag\\
&& +  \sum_{k=1}^\infty \left( k\,\mathbf{B} \,\mathbf{c}^{k-1}  + k! \, \mathbf{V}\, \mathbf{q}_k - k!   \sum_{\ell=0}^k \frac{\widehat{q}_{k-\ell}}{\ell!}  \right) \, \frac{h^k}{k!}\phi^{(k)}(t_{n-1}) + {\cal O}\left(h^{\widehat{q}+1}\right) \notag\\
\label{eqn:PR-solved-solution-err}
\end{eqnarray}
The order condition \eqref{eqn:GLM-order_condition-2} of the nonstiff scheme reads
\begin{eqnarray*}
&& e^z w(z) = z \mathbf{B}\, e^{cz} + \mathbf{V} w(z) + \mathcal{O}\left(z^{p+1}\right) \\
&& \sum_{\ell \ge 0} \sum_{k=0}^p  \frac{\mathbf{q}_k z^{k+\ell}}{\ell!} =  \sum_{k=0}^\infty  \mathbf{B} \frac{\mathbf{c}^k z^{k+1}}{k!} 
+  \sum_{k=0}^p \mathbf{V} \mathbf{q}_k z^k + \mathcal{O}\left(z^{p+1}\right)\,.
\end{eqnarray*}
Identification of powers of $z^k$ leads to
\begin{eqnarray*}
&& \sum_{\ell = 0}^k p  \frac{\widehat{q}_{k-\ell} z^{k}}{\ell!} =  \mathbf{B} \frac{\mathbf{c}^{k-1} z^{k}}{(k-1)!} 
+  \mathbf{V} \mathbf{q}_k z^k\,, \quad k=1,\dots,p\,.
\end{eqnarray*}
%
%
The error recurrence \eqref{eqn:PR-solved-solution-err} becomes
\begin{eqnarray}
\label{eqn:PR-error-reccurence-mod}
e_n 
&=&\widehat{\mathbf{M}}( h\mu) \, e_{n-1}  + {\cal O}\left(h^{\min(q+1,p+1)}\right) \,.
\end{eqnarray}
Assume that the initial error is $e_0 = \mathcal{O}(h^p)$.
The error amplification matrix $\widehat{\mathbf{M}}(h\mu)$ is the 
stability matrix of the implicit method. 
Therefore its spectral radius is uniformly bounded below one for all argument values $h\mu$ of interest.
By standard numerical ODE arguments \cite{Hairer_1993} the
equation (\ref{eqn:PR-error-reccurence-mod}) implies convergence of global errors to zero at a rate 
$\| e_n \| = {\cal O}\left(h^{\min(p,q)}\right)$.
\end{proof}

\section{Construction of implicit-explicit methods of orders two and three}

We now construct IMEX-DIMSIM methods as summarized in Section \ref{sec:DIMSIM}.
Specifically,  we focus on DIMSIMs with $p=q=r=s$, $\mathbf{U}= \mathbf{I}_{s \times s}$, and $\mathbf{V}=\mathbf{1}_s\, v^T$, where $v^T\, \mathbf{1}_s=1$ \cite{Jackiewicz_2009_book}.

\subsection{Two-stage, second-order pairs with $p=q=r=s=2$}\label{sec:IMEX-order2}

The pair of explicit and implicit schemes developed in \cite{Zhang_2012} is named IMEX-DIMSIM-2A and consists of a type 2 DIMSIM from \cite{Butcher_1993a} with the same stability of SDIRK method of order 2, and a type 1 derived DIMSIM. Both of them share the same abscissa vector $\mathbf{c}=[0,1]^T$ and the same coefficient matrix $\mathbf{V}$. The IMEX-DIMSIM-2A coefficients in the tableau \eqref{eqn:imex_glm_tableau} representation are 
\begin{equation*}
\label{eqn:imex_glm_ord2}
  \begin{array}{c|c c |c c |c c}
    0 &    0 &      0 & \frac{2-\sqrt{2}}{2} &    0     &     1  & 0  \\  
    1 &    2 &      0 &   \frac{2\sqrt{2}+6}{7} & \frac{2-\sqrt{2}}{2} & 0 & 1\\ \hline
      &   \frac{3\sqrt{2}-1}{4} & \frac{3-\sqrt{2}}{4} & \frac{73-34\sqrt{2}}{28} & \frac{4\sqrt{2}-5}{4} & \frac{3\sqrt{2}-3}{4} & \frac{1-\sqrt{2}}{4} \\
      &   \frac{3\sqrt{2}-3}{4} & \frac{1-\sqrt{2}}{4} & \frac{87-48\sqrt{2}}{28} & \frac{-45+34\sqrt{2}}{28} &
\frac{3-\sqrt{2}}{2} & \frac{\sqrt{2}-1}{2}  
\end{array}\,.
\end{equation*}

The choice of $\lambda = (2-\sqrt{2})/ 2$ ensures the type implicit part of IMEX-DIMSIM-2A is L-stable. 
Inherited Runge-Kutta stability is a desirable property, 
but there are not enough free parameters to enforce this property on both methods of the IMEX pair at the same time. 

For a given implicit scheme we construct the explicit method by maximizing the constrained stability region 
\eqref{eqn:stability-constrained}. We have observed that simply maximizing the
explicit stability region $S$ is insufficient and can lead to a very poor constrained stability region for the IMEX method. 
The matrix $\mathbf{B}$ can be determined by $\mathbf{A}$, $\mathbf{c}$ and $\mathbf{V}$ according to the order condition (\ref{eqn:coeB}). 
The only free parameter is $a_{2,1}$ in matrix $\mathbf{A}$, and it is chosen such as to maximize IMEX stability.
First, we use a Matlab Differential Evolution package \footnote{http://www.mathworks.com/matlabcentral/fileexchange/18593-differential-evolution} as a heuristic for global optimization to generate a starting point. 
Then we run the Matlab routine \texttt{fminsearch} multiple times until the result converges; 
each run is initialized with the previous result. The resulting stability regions are reported in Figure \ref{fig:sta_reg_ord2}.

This procedure led to another explicit scheme that maximizes the IMEX stability
\[
\mathbf{A} = \begin{bmatrix} 0 \quad 0 \\ 1.5 \quad 0\end{bmatrix}, \quad 
\mathbf{B} = \begin{bmatrix} \frac{\sqrt{2}}{2} \quad \frac{3-\sqrt{2}}{4} \\ \frac{\sqrt{2}-1}{2} \quad \frac{3-\sqrt{2}}{4}\end{bmatrix};
\]
$\mathbf{U}$ and $\mathbf{V}$ are the same.
We call the new pair IMEX-DIMSIM-2B.
The termination procedure \eqref{eqn:imex_glm_termination}
has the following parameters
\begin{eqnarray*}
\widehat{\beta}_{0,1} = \widehat{b}_{1,1}, \quad \widehat{\beta}_{0,2} = \widehat{b}_{1,2}, \quad \widehat{\gamma}_{0,1} = v_{1,1}, \quad 
\widehat{\gamma}_{0,2} = v_{1,2}. 
\end{eqnarray*}
Solving the condition (\ref{eqn:BV}) gives
\begin{eqnarray*}
& &\beta_{0,1} = \frac{73-34 \sqrt{2}}{28} + \frac{43-31\sqrt{2}}{28}g , \quad \beta_{0,2} = \frac{-1+2 \sqrt{2}}{4} +\frac{-4+3\sqrt{2}}{4}g , \\
& &\gamma_{0,1} = \frac{3-\sqrt{2}}{2} + \frac{2-\sqrt{2}}{2}g ,\quad \gamma_{0,2} = \frac{\sqrt{2}-1}{2} + \frac{\sqrt{2}-2}{2}g .\\
\end{eqnarray*}
The choice of the free parameter $g = 0$ leads to $\gamma_{0,1} = \widehat{\gamma}_{0,1}$, $\gamma_{0,2} = \widehat{\gamma}_{0,2}$, and \eqref{eqn:imex_glm_termination_reduced}.

\begin{figure} 
\centering{
\subfigure[Stability region $\widehat{S}$ of the implicit method]{
\includegraphics[width=0.3\textwidth]{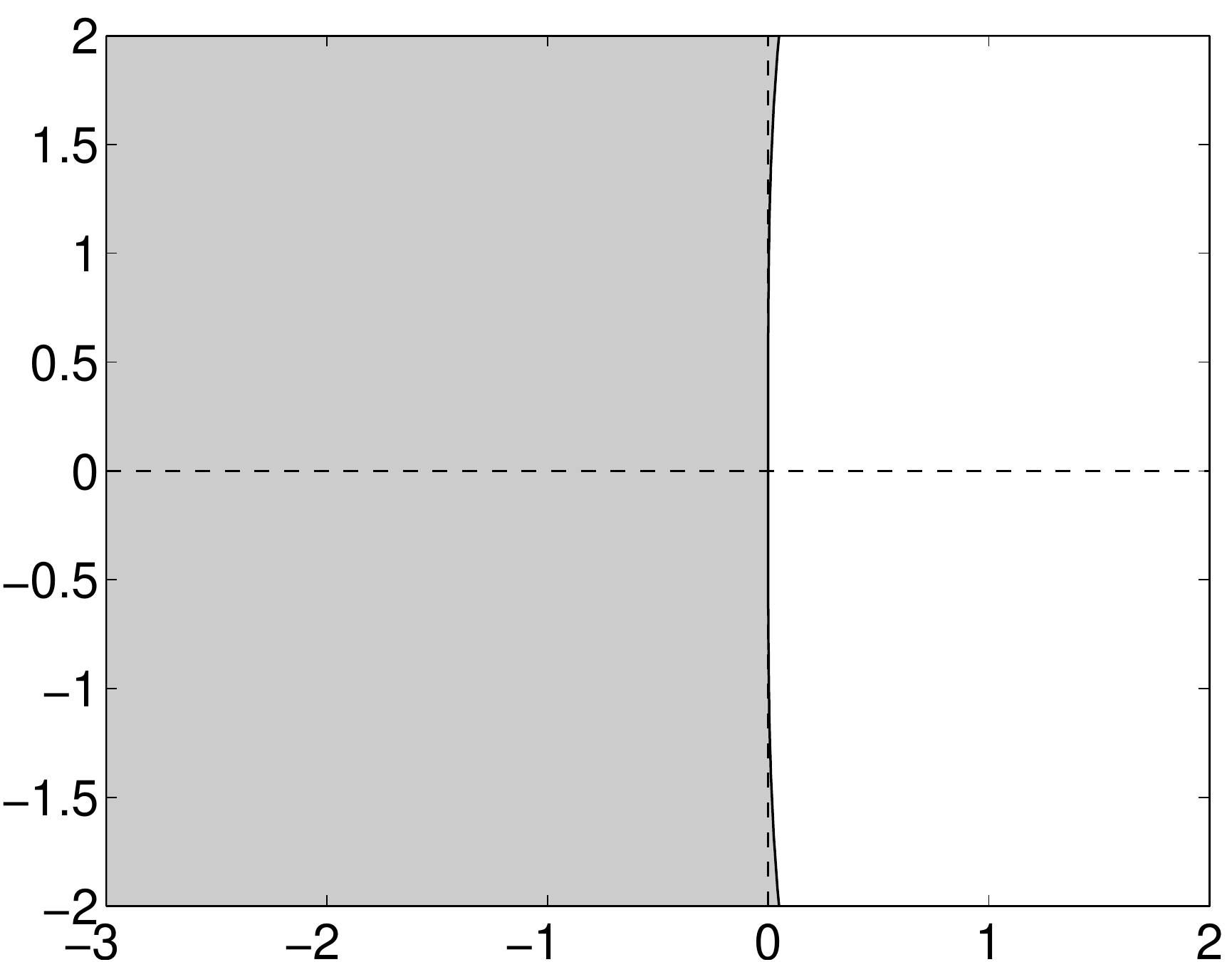}
}
  \subfigure[Stability region $S$ of the explicit method]{
 \includegraphics[width=0.3\textwidth]{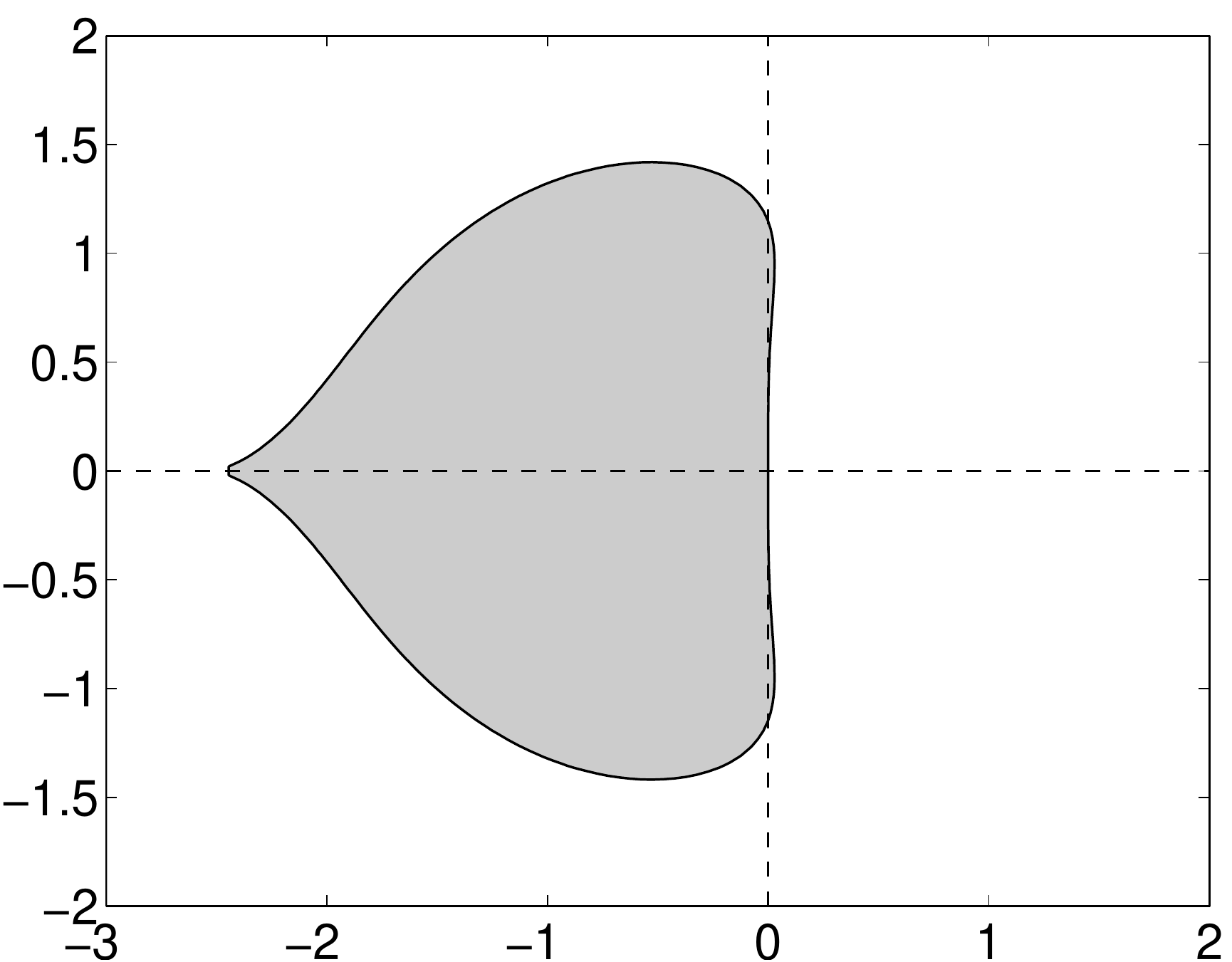}
 }
\subfigure[Constrained stability region $\mathcal{S}_{\alpha}$ \eqref{eqn:stability-constrained} for
 $\alpha=90^\circ$]{
 \includegraphics[width=0.3\textwidth]{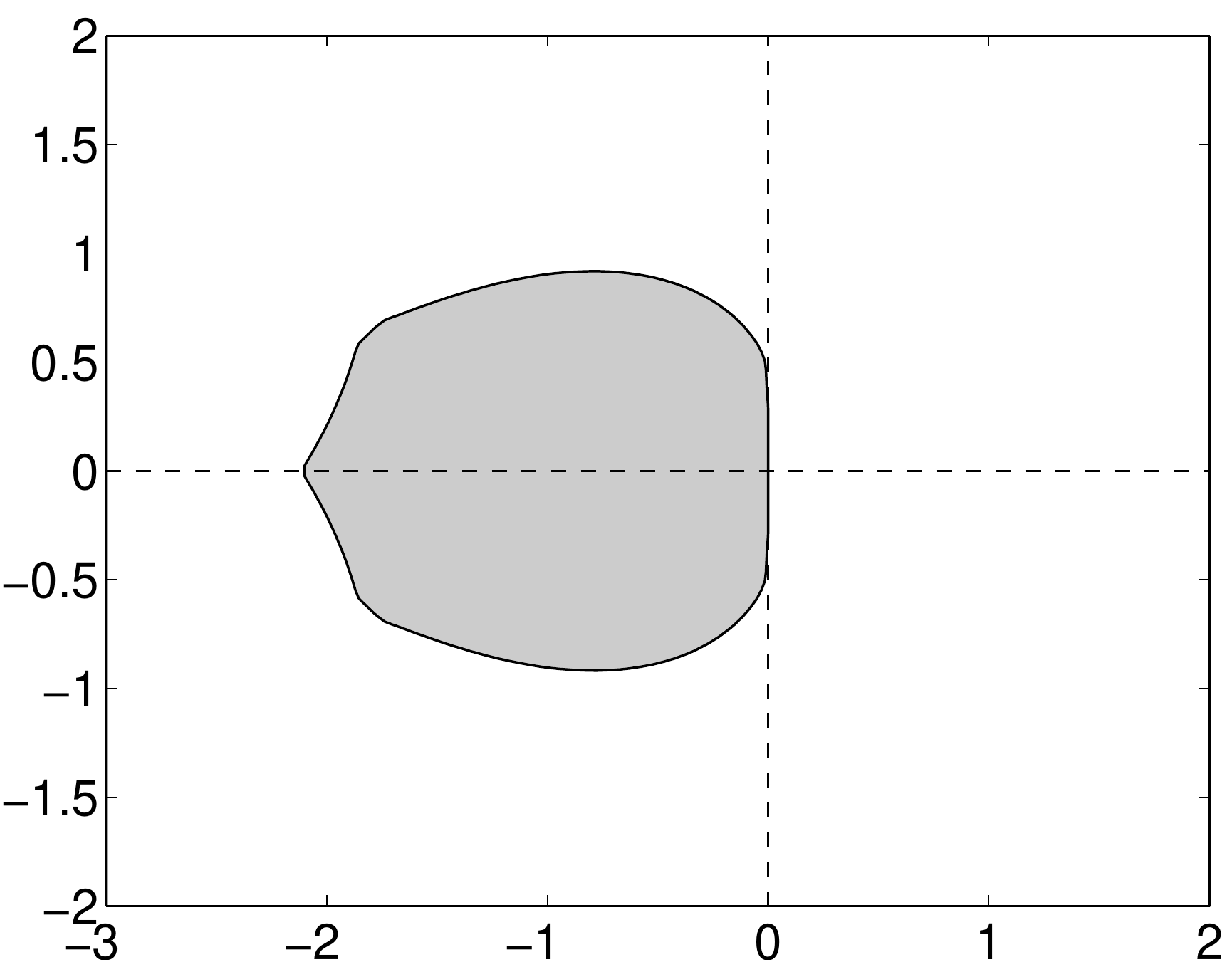}
 }
}
\caption{Stability regions for the IMEX-DIMSIM-2B pair}
\label{fig:sta_reg_ord2}
\end{figure}
%

\subsection{Three-stage, third-order pairs with $p=q=r=s=3$}\label{sec:IMEX-order3a}

We construct two implicit-explicit pairs named IMEX-DIMSIM-3A and IMEX-DIMSIM-3B starting from two existing implicit methods. All coefficients are obtained from the numerical solution of order conditions using Mathematica. The calculations are performed with 24 digits of accuracy such as to reduce the impact of roundoff errors on the 
resulting coefficient values. 
 
\paragraph{IMEX-DIMSIM-3A.} According to \cite{Butcher_1996} there are five A-stable type 2 DIMSIMs with the choice $\lambda=1/2$ and $\mathbf{c}=[0,1/2,1]^T$. 
We select the implicit component in Table \ref{tab:order-3a-implicit-coefficients} which has a balanced set of coefficients.

The explicit component is obtained by a numerical maximization of the constrained stability region,
as discussed in the previous section. The resulting coefficients are shown in Table \ref{tab:order-3a-explicit-coefficients}. 
The IMEX stability regions are drawn in Figure \ref{fig:sta_reg_ord3a}.

The termination procedure  \eqref{eqn:imex_glm_termination} is given by 
\begin{eqnarray*}
& &\beta_{0,1} = 1.01640094894605, \quad \beta_{0,2} = 0.632229903531054, \quad \beta_{03} = 0.0919425241172364, \\
& &\widehat{\beta}_{0,1} = b_{1,1}, \quad \widehat{\beta}_{0,2} = b_{1,2}, \quad \widehat{\beta}_{03} =b_{13}, \\
& &\gamma_{0,1} = \widehat{\gamma}_{0,1} =  v_{1,1}, \quad \gamma_{0,2} = \widehat{\gamma}_{0,2} = v_{1,2}, \quad \gamma_{0,3} = \widehat{\gamma}_{03} = v_{1,3} .
\end{eqnarray*}

\paragraph{IMEX-DIMSIM-3B.} The choice of $\lambda = 0.435866521508459$ and $\mathbf{c}=[0,1/2,1]^T$ leads to the L-stable type 2 DIMSIM reported in \cite{Butcher_1996}. The coefficients of the implicit component are presented in Table \ref{tab:order-3b-implicit-coefficients}.

The type 1 component  is shown in Table \ref{tab:order-3b-explicit-coefficients}. The IMEX stability regions are drawn in Figure \ref{fig:sta_reg_ord3b}.

The coefficients $\widehat{\beta}$ and $\gamma$ of the termination procedure  \eqref{eqn:imex_glm_termination} are equal to the first rows of matrices $\mathbf{B}$ and $\mathbf{V}$, respectively. In addition
\begin{equation*}
\beta_{0,1} = 0.833790728250125, \quad \beta_{0,2} = 0.645998912146314, \quad \beta_{0,3} = 0.120039435995489. 
\end{equation*}
\begin{sidewaystable}[H]
\resizebox{\textheight}{!}{
\tabcolsep=2pt
%
\begin{tabular}{c c c | c c c}
  $0.5$ & $0$ & $0$ & $1$ & $0$ & $0$ \\ 
  $0.200835027145109$ & $0.5$ & $0$ & $0$ & $1$ & $0$ \\ 
  $-1.30998408899641$ & $1.01685248853025$ & $0.5$ & $0$ & $0$ & $1$ \\ \hline
   $1.01640094894605$ & $0.632229903531054$ & $-0.408057475882764$ & $0.910428360600012$ & $0.358564648055175$ & $-0.268993008655188$ \\
  $0.724734282279383$ & $1.46556323686439$ & $-0.6505591694540$ &  $0.910428360600012$ & $0.358564648055175$ & $-0.268993008655188$  \\
  $-0.333784872917534$ & $4.34945403578847$ & $-1.481964185810437$ &  $0.910428360600012$ & $0.358564648055175$ & $-0.268993008655188$  \\
\end{tabular}}
%
\caption{\label{tab:order-3a-implicit-coefficients}Coefficients of the implicit method of the IMEX-DIMSIM-3A pair.}
\end{sidewaystable}
\begin{sidewaystable}[H]
\resizebox{\textheight}{!}{
\tabcolsep=2pt
%
\begin{tabular}{c c c | c c c}
  $0.435866521508459$ & $0$ & $0$ & $1$ & $0$ & $0$ \\ 
  $0.250514880897719$ & $0.435866521508459$ & $0$ & $0$ & $1$ & $0$ \\ 
  $-1.211594287777006$ & $1.00127459988119$ & $0.435866521508459$ & $0$ & $0$ & $1$ \\ \hline
   $0.833790728250125$ & $0.645998912146314$ & $-0.315827085512970$ & $0.552090962040363$ & $0.734856659871292$ & $-0.286947621911655$ \\
  $0.606257540075000$ & $1.28693181000502$ & $-0.479741676094274$ &  $0.552090962040363$ & $0.734856659871292$ & $-0.286947621911655$  \\
  $-0.308416769489771$ & $3.80342155052421$ & $-1.12072253825515$ &  $0.552090962040363$ & $0.734856659871292$ & $-0.286947621911655$  \\
\end{tabular}}
%
\caption{\label{tab:order-3b-implicit-coefficients}Coefficients of the implicit method of the IMEX-DIMSIM-3B pair.}
\end{sidewaystable}
\begin{sidewaystable}
\resizebox{\textheight}{!}{
\tabcolsep=2pt
\begin{tabular}{c c c | c c c}
  $0$ & $0$ & $0$ & $1$ & $0$ & $0$ \\ 
  $0.773142038041842$ & $0$ & $0$ & $0$ & $1$ & $0$ \\ 
  $-0.574721803854933$ & $1.40234019763932$  & $0$ & $0$ & $0$ & $1$ \\ \hline
  $0.568615416356845$ & $0.349254080830621$ & $0.226439028444830$ &  $0.910428360600012$ & $0.358564648055175$ & $-0.268993008655188$    \\
  $0.776948749690179$ & $-0.317412585836046$ & $0.411630323736322$  & $0.910428360600012$ & $0.358564648055175$ & $-0.268993008655188$    \\
  $0.332941885384188$ & $1.22294134041526$ & $-0.239193093951542$& $0.910428360600012$ & $0.358564648055175$ & $-0.268993008655188$ 
\end{tabular}}
\caption{Coefficients of the explicit method of the IMEX DIMSIM-3A pair.\label{tab:order-3a-explicit-coefficients}}
\end{sidewaystable}

\begin{sidewaystable}
\resizebox{\textheight}{!}{
\tabcolsep=2pt
\begin{tabular}{c c c | c c c}
  $0$ & $0$ & $0$ & $1$ & $0$ & $0$ \\ 
  $0.753076872681821$ & $0$ & $0$ & $0$ & $1$ & $0$ \\ 
  $-0.4897243738259477$ & $1.28728279647947$  & $0$ & $0$ & $0$ & $1$ \\ \hline
  $0.755324932592235$ & $0.24363012413977$ & $0.245110297813246$ &  $0.552090962040363$ & $0.734856659871292$ & $-0.286947621911655$    \\
  $0.963658265925568$ & $-0.423036542526896$ & $0.450366758464759$  & $0.552090962040363$ & $0.734856659871292$ & $-0.286947621911655$    \\
  $0.634708802779431$ & $0.772145180244847$ & $0.0396529488674508$& $0.552090962040363$ & $0.734856659871292$ & $-0.286947621911655$
\end{tabular}}
\caption{Coefficients of the explicit method of the IMEX DIMSIM-3B pair.\label{tab:order-3b-explicit-coefficients}}
\end{sidewaystable}

\begin{figure} 
\centering{
\subfigure[Stability region $\widehat{S}$ of the implicit method]{
\includegraphics[width=0.3\textwidth]{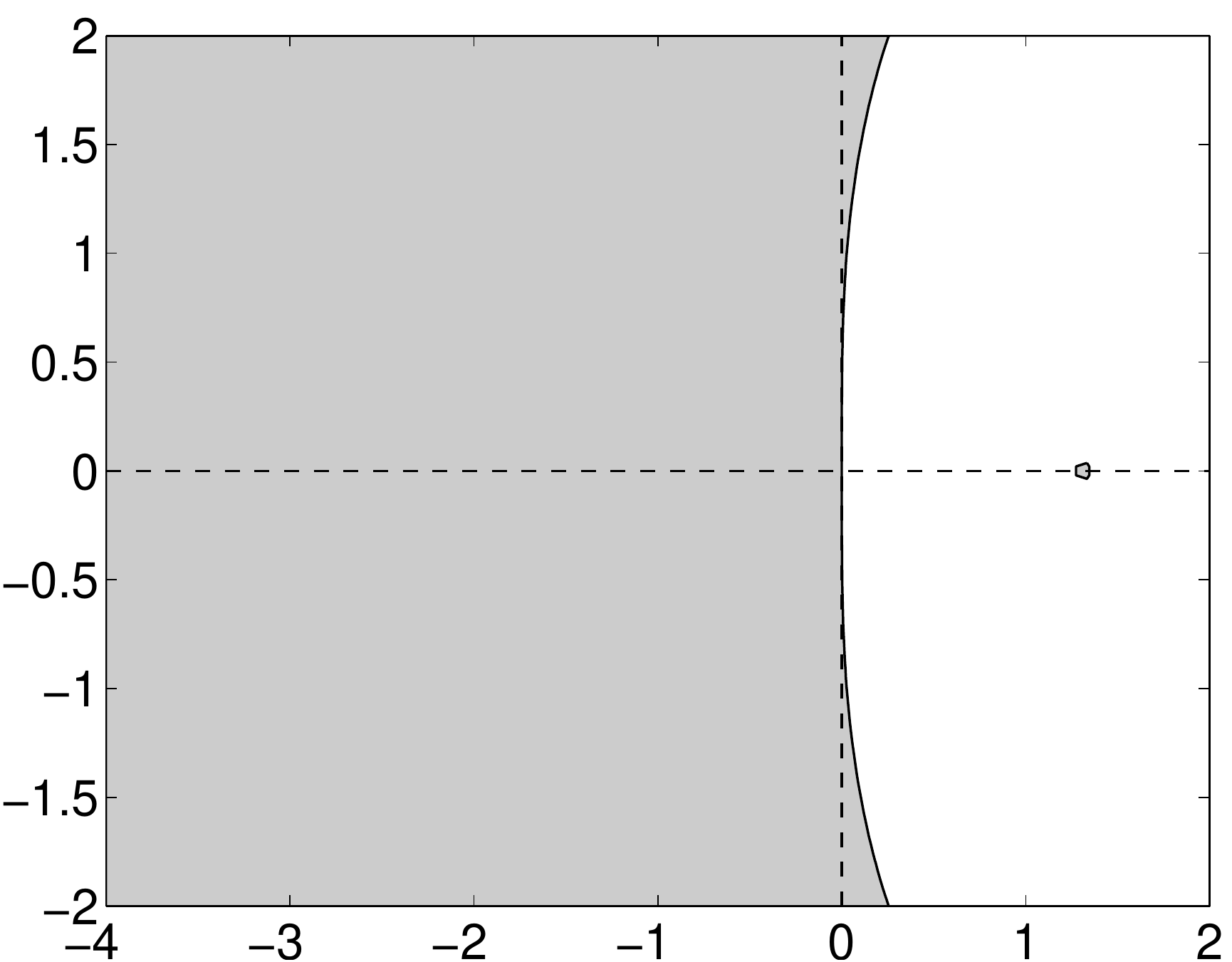}
}
 \subfigure[Stability region $S$ of the explicit method]{
 \includegraphics[width=0.3\textwidth]{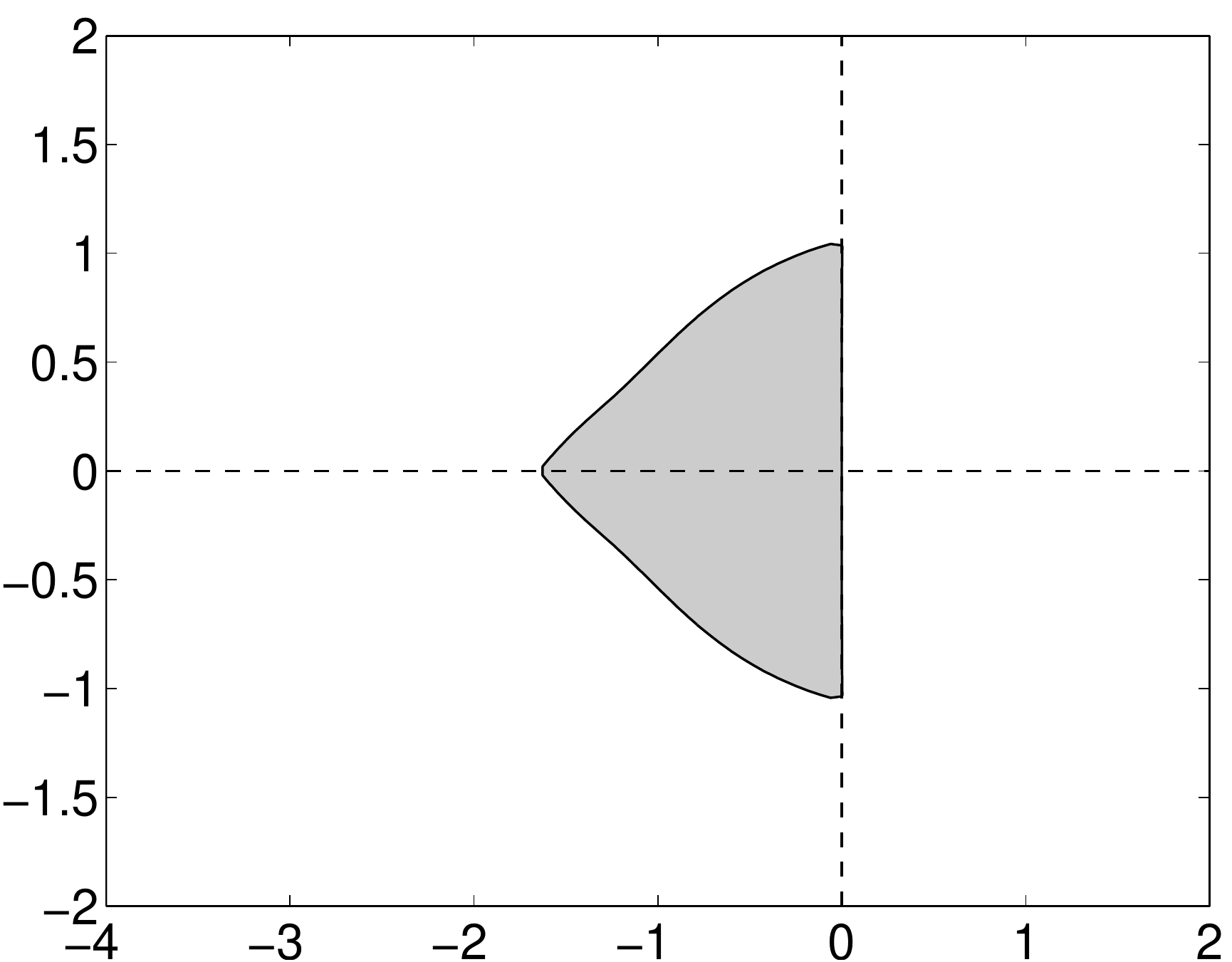}
 }
 \subfigure[Constrained stability region $\widehat{\mathcal{S}}_{\alpha}$ \eqref{eqn:stability-constrained} for
 $\alpha=90^\circ$]{
 \includegraphics[width=0.3\textwidth]{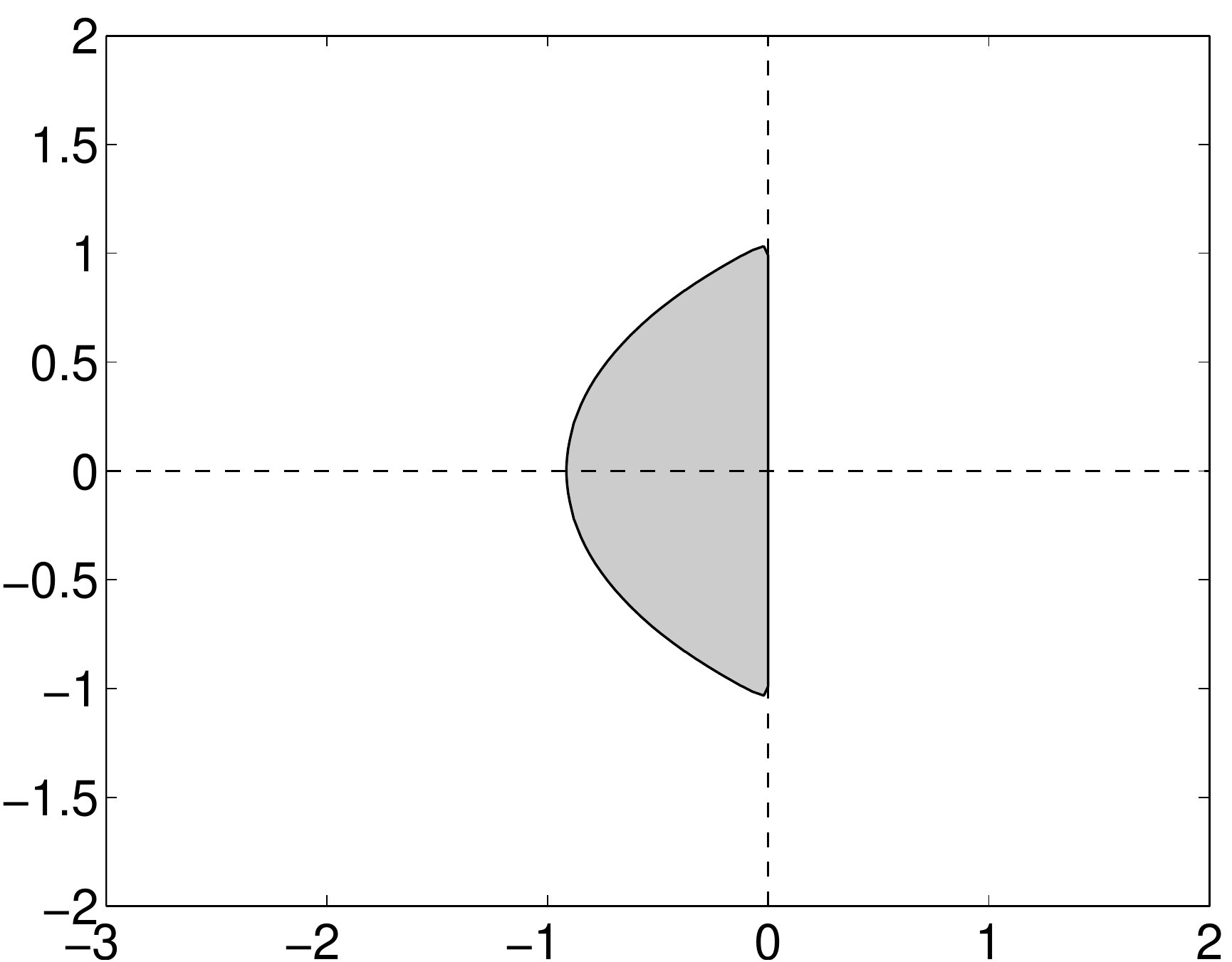}
 }
}
\caption{Stability regions for the IMEX-DIMSIM-3A pair of schemes\label{fig:sta_reg_ord3a}}
\end{figure}

\begin{figure} 
\centering{
\subfigure[Stability region $\widehat{S}$ of the implicit method]{
\includegraphics[width=0.3\textwidth]{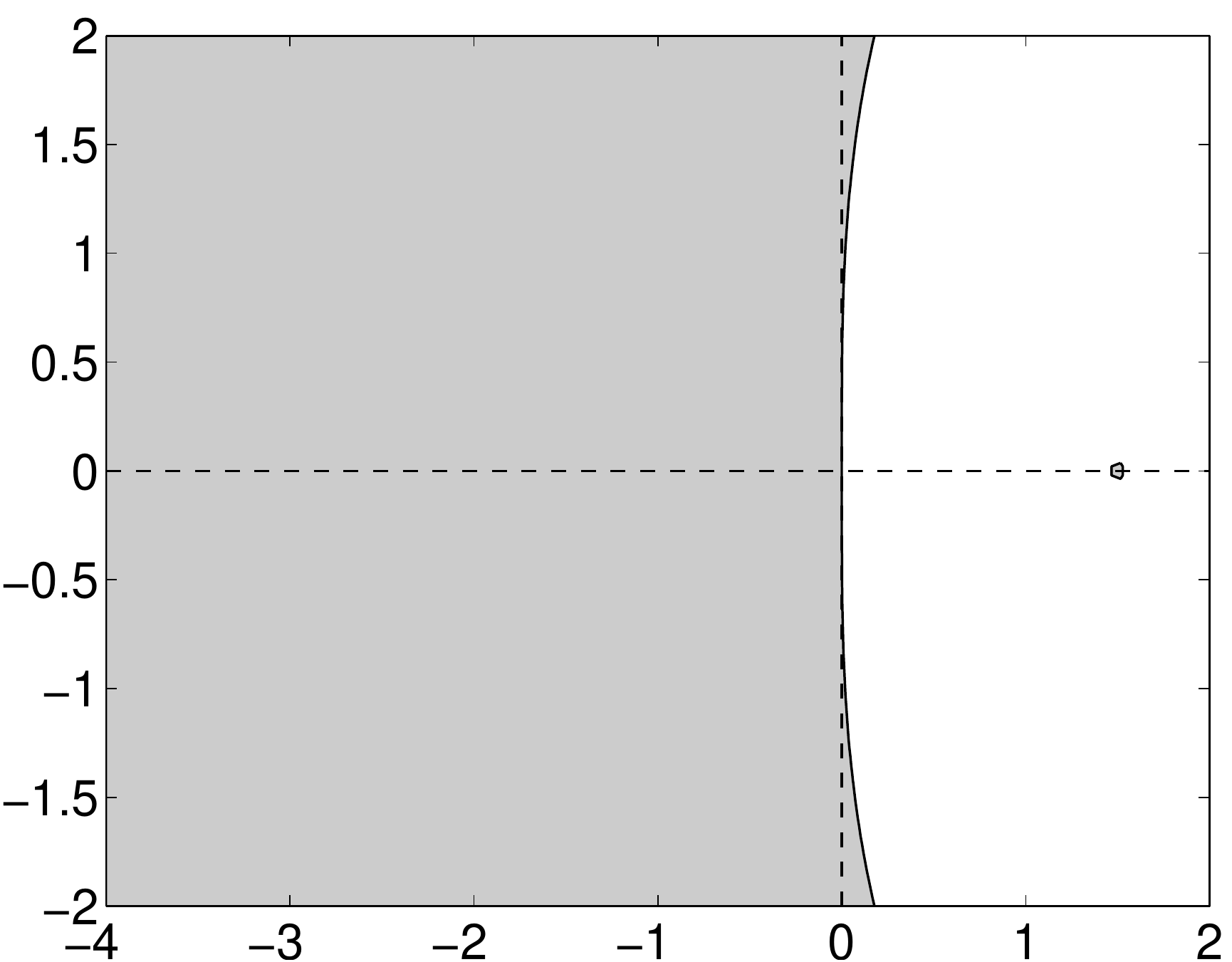}
}
 \subfigure[Stability region $S$ of the explicit method]{
 \includegraphics[width=0.3\textwidth]{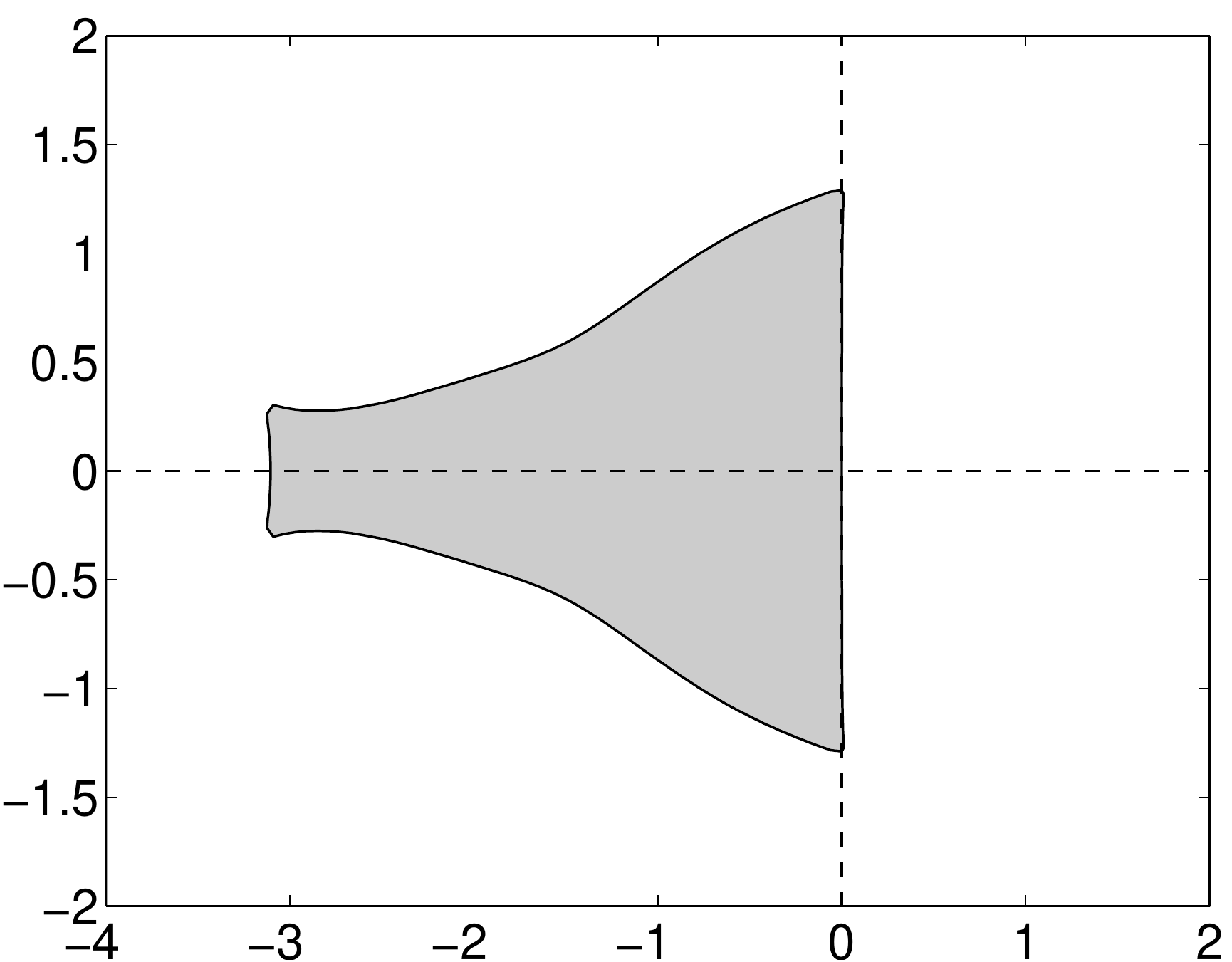}
 }
 \subfigure[Constrained stability region $\widehat{\mathcal{S}}_{\alpha}$ \eqref{eqn:stability-constrained} for
 $\alpha=90^\circ$]{
 \includegraphics[width=0.3\textwidth]{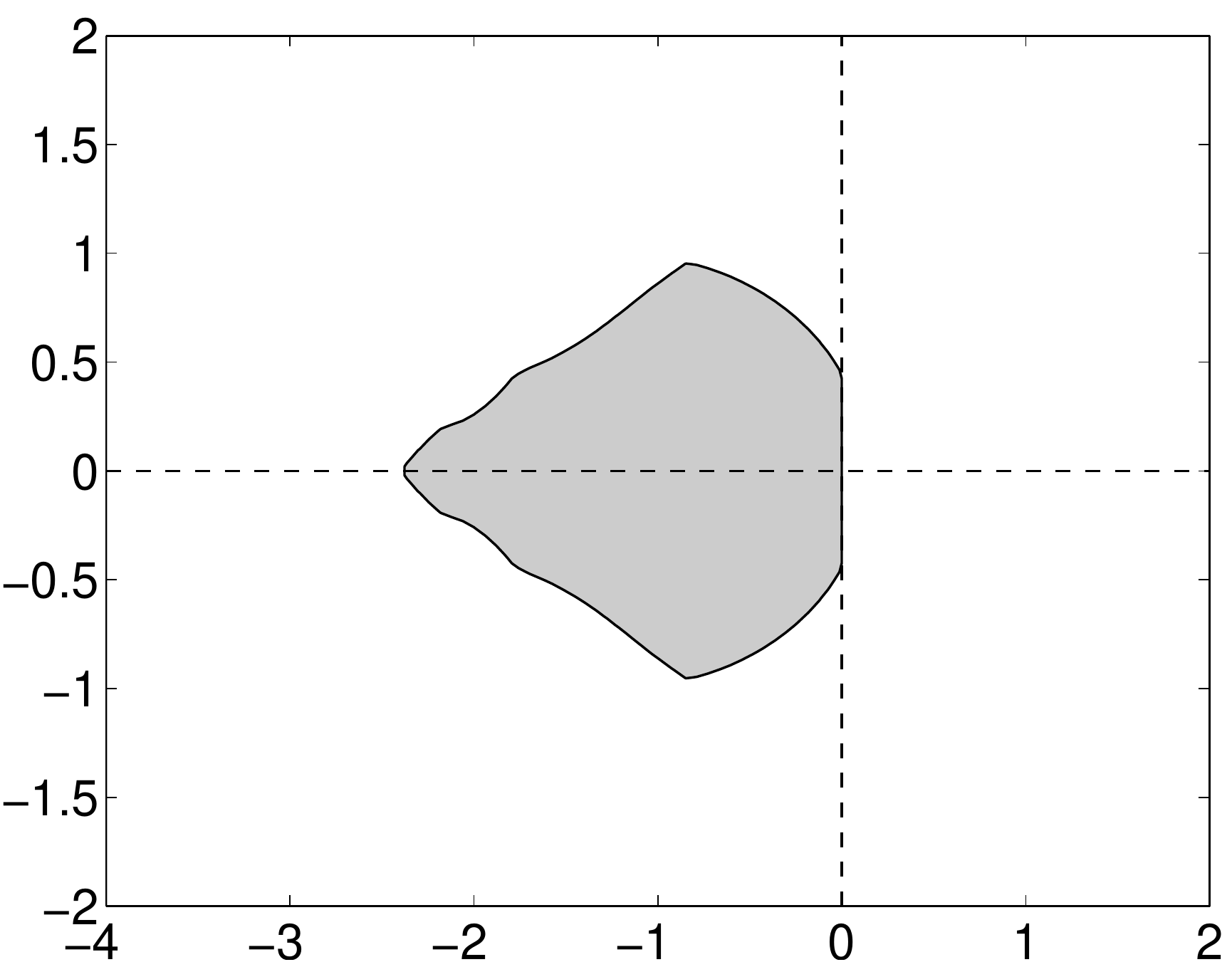}
 }
}
\caption{Stability regions for the IMEX-DIMSIM-3B pair of schemes\label{fig:sta_reg_ord3b}}
\end{figure}

\section{Numerical results}\label{sec:results}

We test the IMEX-GLM methods on two test problems. The first one is the van der Pol equation, a commonly used small ODE system that emphasizes convergence under stiffness.
The second test is a PDE problem arising in atmospheric modeling. 
We implemented our algorithms in a discontinuous Galerkin finite element model developed by Blaise et al. \cite{Blaise_2012}, 
which has efficient parallel scalability. 
We report the results obtained with IMEX-DIMSIM-2B and IMEX DIMSIM-3B methods, since they have the better accuracy and stability properties among their peers of the same order.

\subsection{Van der Pol equation}\label{sec:vdPol}

We consider the nonlinear van der Pol equation  with a split right hand side
\begin{equation}
\vect{y'\\z'} = f(y,z)+g(y,z)=\vect{z \\0} +\vect{0 \\ \left((1- y^2)z   -y\right)/\varepsilon}
\end{equation}
on the time interval $[0,0.5]$, with initial values
\begin{equation}
y(0) = 2, \quad z(0) = -\frac{2}{3}+\frac{1,0}{81}\varepsilon -\frac{292}{2187}\varepsilon^2- \frac{1814}{19683}\varepsilon^3+\mathcal{O}(\varepsilon^4).
\end{equation}
We consider  $\varepsilon = 10^{-6}$, a stiff case in which many methods suffer from order reduction \cite{Kennedy_2003}. 

The initialization \eqref{eqn:initialization-formula} was done using the analytic derivatives.
The reference solution is obtained with Radau-5, a stiffly accurate method \cite{Hairer_1993}, with very tight tolerances of 
$atol=rtol=5 \times {10^{-15}}$.
We compare the new methods with IMEX DIRK$(3,4,3)$, a L-stable, three-stage, third-order IMEX Runge-Kutta method proposed in \cite{Ascher_1997}. 

Figure \ref{fig:vdp} shows the global error, measured in the $L_2$ norm, against step size $h$. 
A geometric sequence of step sizes, $\tau$, $\tau/2$, $\tau/4$ and so on, were used. 
Order reduction can be clearly observed for the IMEX Runge-Kutta method, 
which yields second-order convergence. 
The IMEX DIMSIM converges at the theoretical third order and gives more accurate result than the IMEX Runge-Kutta method.        
Second-order IMEX DIMSIMs also produced no order reduction; detailed results have been reported in \cite{Zhang_2012}. 
These results indicate that the high stage order of IMEX DIMSIMs make them particularly attractive for solving stiff problems, where Runge-Kutta methods may suffer from order reduction.    

\begin{figure}
\centering{
 \includegraphics[width=0.50\textwidth]{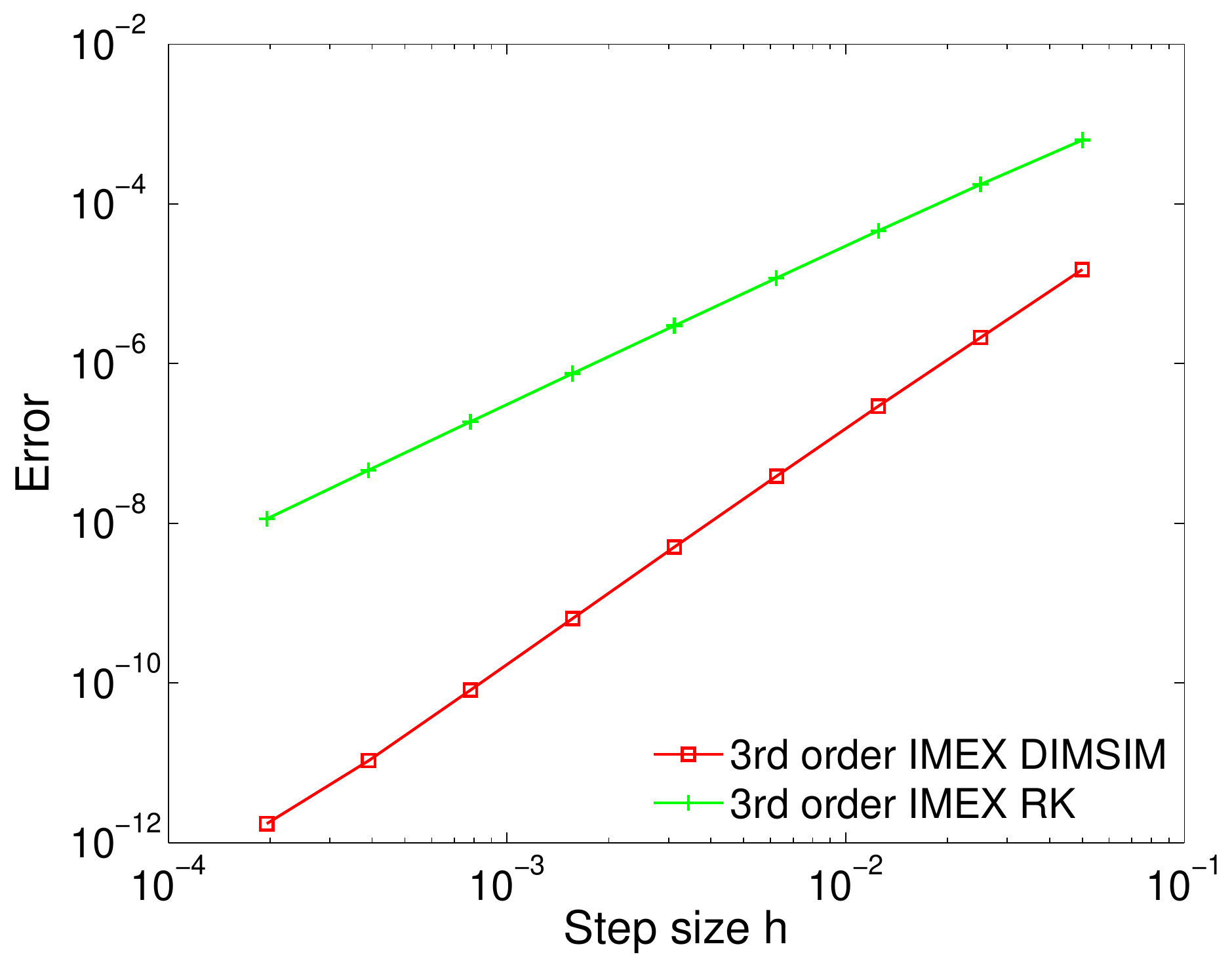}
\caption{Convergence results for third-order IMEX schemes on the van der Pol equation.}
\label{fig:vdp}
}
\end{figure}
%
\subsection{Gravity waves}\label{sec:gravity-waves}
Consider the dynamics of gravity waves, which is governed by the compressible Euler equation in the conservative form \cite{Giraldo_2010}
\begin{subequations}
 \label{eqn:gw}
\begin{eqnarray}
 \nonumber
\frac{\partial \rho}{\partial t} + \nabla \cdot \left( \rho \mathbf{u} \right) & = & 0 \\
 \label{eqn:gw-equation}
 \frac{\partial \rho \mathbf{u}}{\partial t} + \nabla \cdot \left( \rho \mathbf{u} \mathbf{u} + p \mathbf{I}\right) & = & - \rho g \mathbf{\widehat{e}_z} \\
 \nonumber
\frac{\partial \rho \theta}{\partial t} + \nabla \cdot \left( \rho \theta\mathbf{u} \right) & = & 0\,,
 \end{eqnarray}
where  $\rho$ is the density, 
$(\mathbf{u}$ is the velocity, $\theta$ is the potential temperature, and $\mathbf{I}$ is a $2 \times 2$ identity matrix. 
The prognostic variables are $(\rho, (\rho \mathbf{u})^T, (\rho \theta)^T)^T$.
The pressure $p$ in the momentum equation is computed by the equation of state
\begin{equation}
 \label{eqn:gw-pressure}
 p=p_0\left( \frac{\rho \theta R}{p_0} \right)^{\frac{c_p}{c_v}}.
\end{equation}
\end{subequations}
To maintain the hydrostatic state, we follow the splitting introduced in \cite{Giraldo_2010}
\begin{eqnarray*}
\rho(\mathbf{x},t) &=& \bar{\rho}(z) + \rho'(\mathbf{x},t) \\
(\rho \theta)(\mathbf{x},t)& =& \overline{(\rho \theta)} (z) + (\rho \theta)'(\mathbf{x},t) \\
p(\mathbf{x},t) &=& \bar{p}(z) + p'(\mathbf{x},t)
\end{eqnarray*}
where the reference (overlined) values are in hydrostatic balance. 
The gravity wave equation \eqref{eqn:gw} can be rewritten as
\begin{subequations}
\label{eqn:new-gravity-wave}
\begin{eqnarray}
\nonumber
 \frac{\partial \rho'}{\partial t} & = & - \nabla \cdot \left( \rho \mathbf{u} \right)  \\
\label{eqn:new-gravity-wave-eqn}
 \frac{\partial \rho \mathbf{u}}{\partial t} & = & - \nabla \cdot \left( \rho \mathbf{u} \mathbf{u} + p' \mathbf{I}\right) - \rho' g \mathbf{\widehat{e}_z} \\
\nonumber
 \frac{\partial (\rho \theta)'}{\partial t} & = & - \nabla \cdot \left( \rho \theta\mathbf{u} \right) \,,
\end{eqnarray}
closed by the equation of state
\begin{equation}
 p'=p_0\left( \frac{\rho \theta R}{p_0} \right)^{\frac{c_p}{c_v}}- \bar{p}. 
\end{equation}
\end{subequations}
The 2D mesh is generated by the software GMSH \cite{Gmsh}. 
The spatial discretization uses discontinuous Galerkin finite elements and was developed by Blaise et al. \cite{Blaise_2012}. 
Figure \ref{fig:snapshots} shows the density, velocity, potential temperature, and pressure variables after $900$ seconds of simulation time. 

The advantage of implicit-explicit time-stepping over explicit time-stepping schemes for this problem has been demonstrated in \cite{Seny_2013}. To apply IMEX integration the right-hand side of \eqref{eqn:new-gravity-wave-eqn} is additively split into linear and nonlinear parts. The linear term
\begin{equation}
 -
 \begin{bmatrix}
  \nabla \cdot \left( \rho \mathbf{u} \right) \\
  \nabla \cdot \left( p' \mathbf{I}\right)  + \rho' g \mathbf{\widehat{e}_z}\\
  \nabla \cdot \left( \rho \bar{\theta}\mathbf{u} \right)
 \end{bmatrix}
\end{equation}
with the pressure linearized as 
\begin{equation*}
 p'=\frac{\gamma \bar{p}}{\overline{\rho \theta}} \left( \rho \theta \right)'
\end{equation*}
is solved implicitly, while the remaining (nonlinear) terms are solved explicitly. 

All the experiments are performed on a workstation with $4$ Intel Xeon E5-2630 Processors (24 cores in total) using $12$ MPI threads.
Note that the parallelization is not implemented at time-stepping level but at the spatial discretization level, therefore the parallel performance does not be affect the comparison of various time integrators. 

Here we compare the performance of IMEX methods for a simulation window of
$30$ seconds. The second order methods are IMEX-DIMSIM-2B and L-stable, two-stage, second-order IMEX DIRK$(2,3,2)$ \cite{Ascher_1997}. The third order methods are IMEX-DIMSIM-3B and IMEX DIRK$(3,4,3)$ \cite{Ascher_1997}. The integrated $L_2$ errors for 
all prognostic variables are measured against a reference solution. 
The reference solution was obtained by applying an explicit RK method to solve the original (non-split) model with a very small time step $h=0.005$.

The error versus computational effort diagrams are shown in Figure \ref{fig:gw}.
All the methods display the theoretical orders of convergence. 
IMEX DIMSIMs and IMEX RK methods perform similarly, with IMEX DIMSIMs yielding slightly better accuracy when the same time steps are chosen. Also,
IMEX DIMSIMs are slightly more efficient in terms of CPU time than the IMEX RK methods of the same order. 
Note that this specific DG implementation requires the solution to be recovered at each time step, therefore the termination procedure has been applied after each each time step. The implementation can be optimized such as to apply the termination procedure only once at the end of the simulation; this would result in additional savings in computational cost. As the order increases, the number of stages required by an IMEX RK method grows rapidly due to order conditions, while an IMEX DIMSIM typically uses a number of stages equal to its order. Consequently, we expect that IMEX DIMSIM methods will become even more competitive for higher orders.  
\begin{figure}
\centering{
\subfigure[density]{
\includegraphics[width=0.45\textwidth]{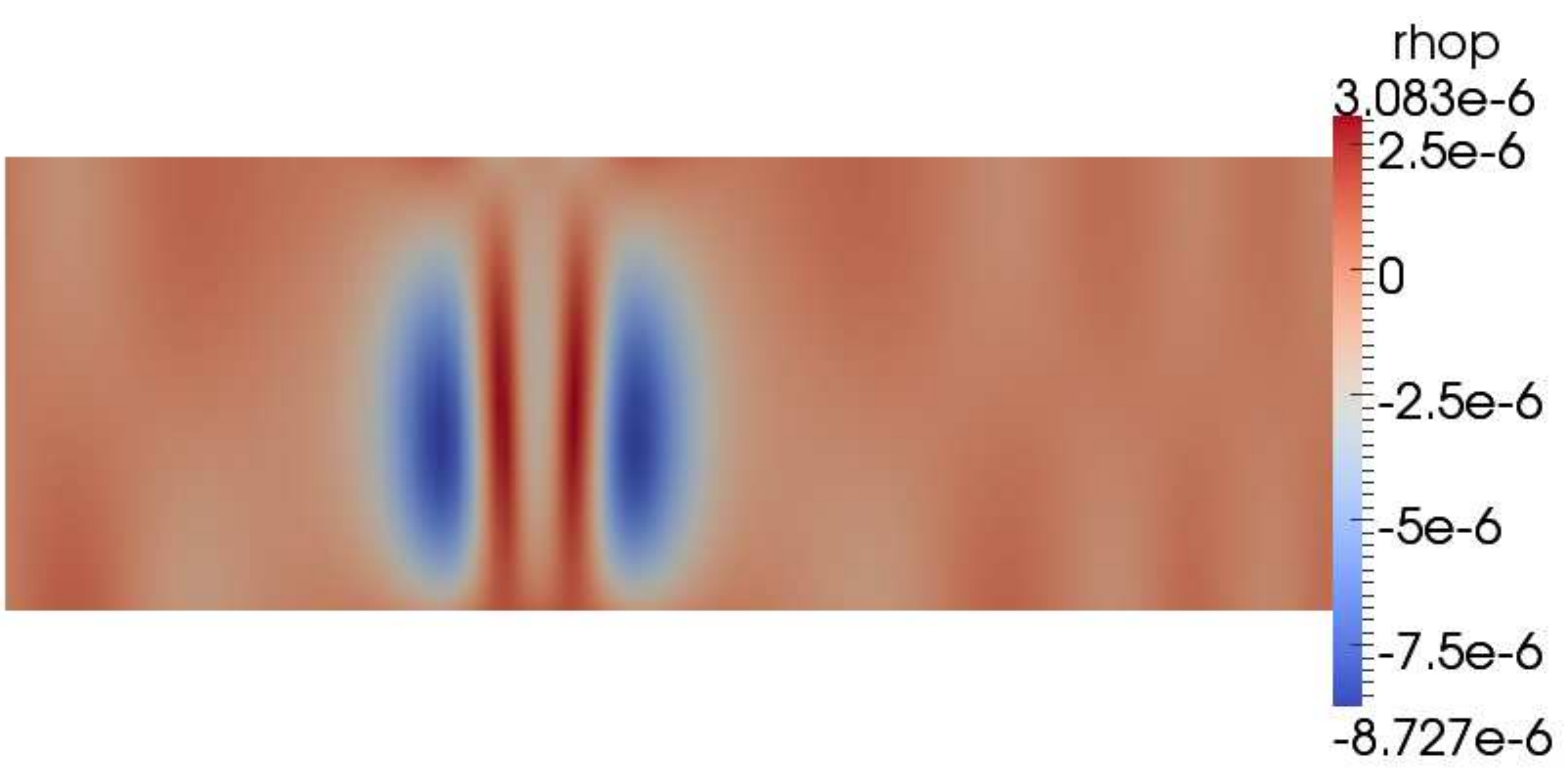}
}
 \subfigure[velocity]{
 \includegraphics[width=0.45\textwidth]{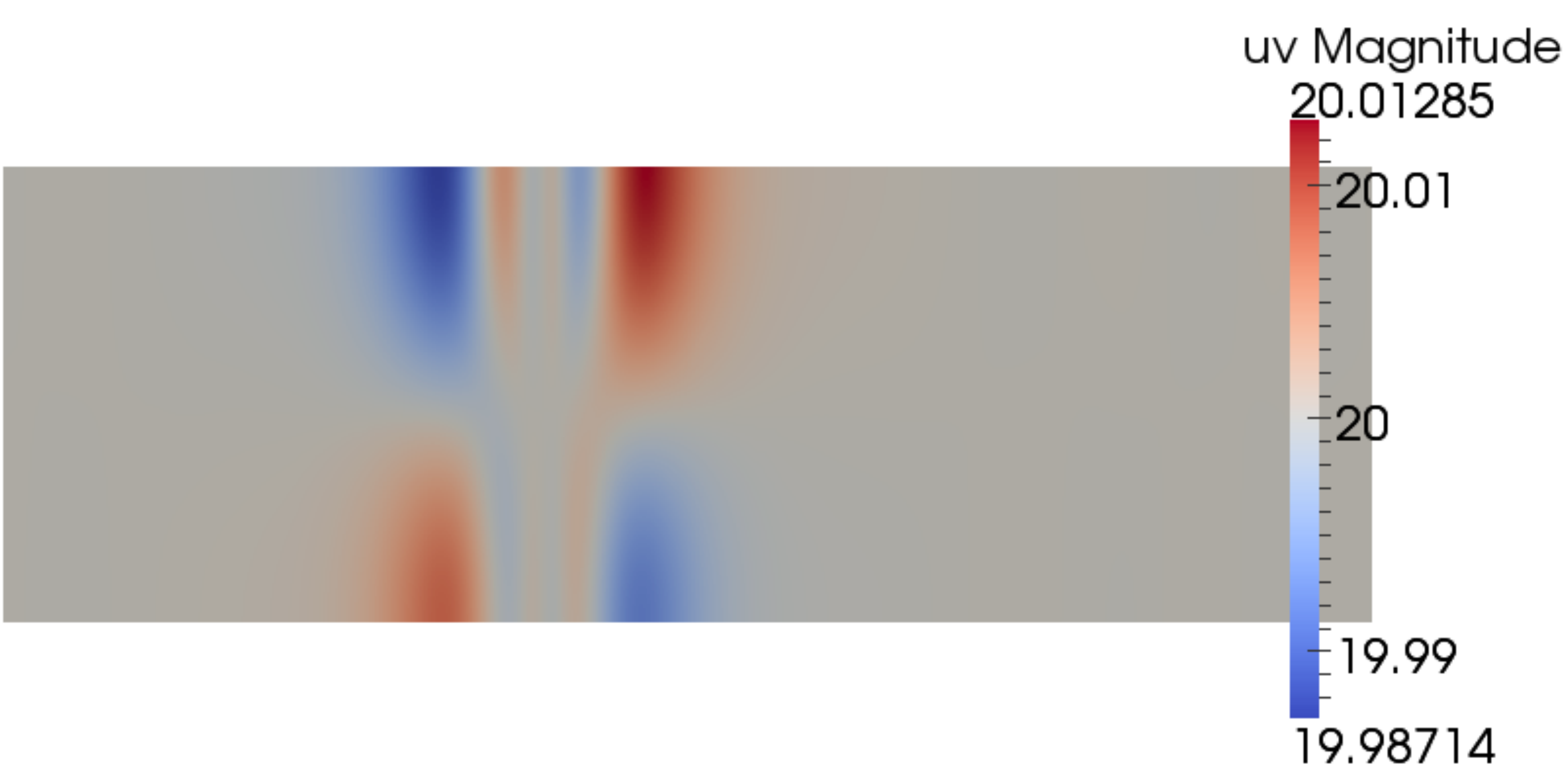}
 }
 \subfigure[potential temperature]{
 \includegraphics[width=0.45\textwidth]{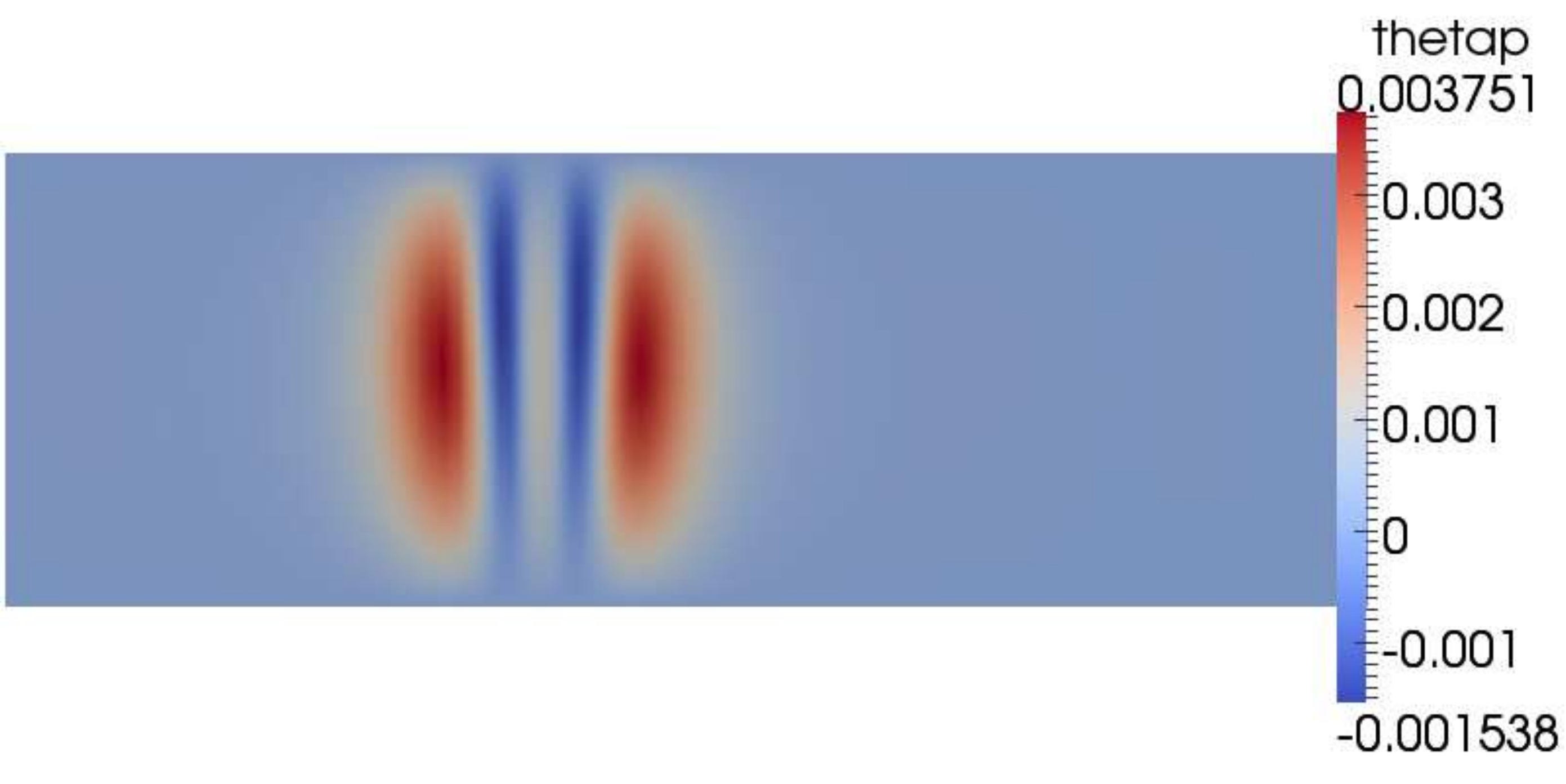}
 }
  \subfigure[pressure]{
 \includegraphics[width=0.45\textwidth]{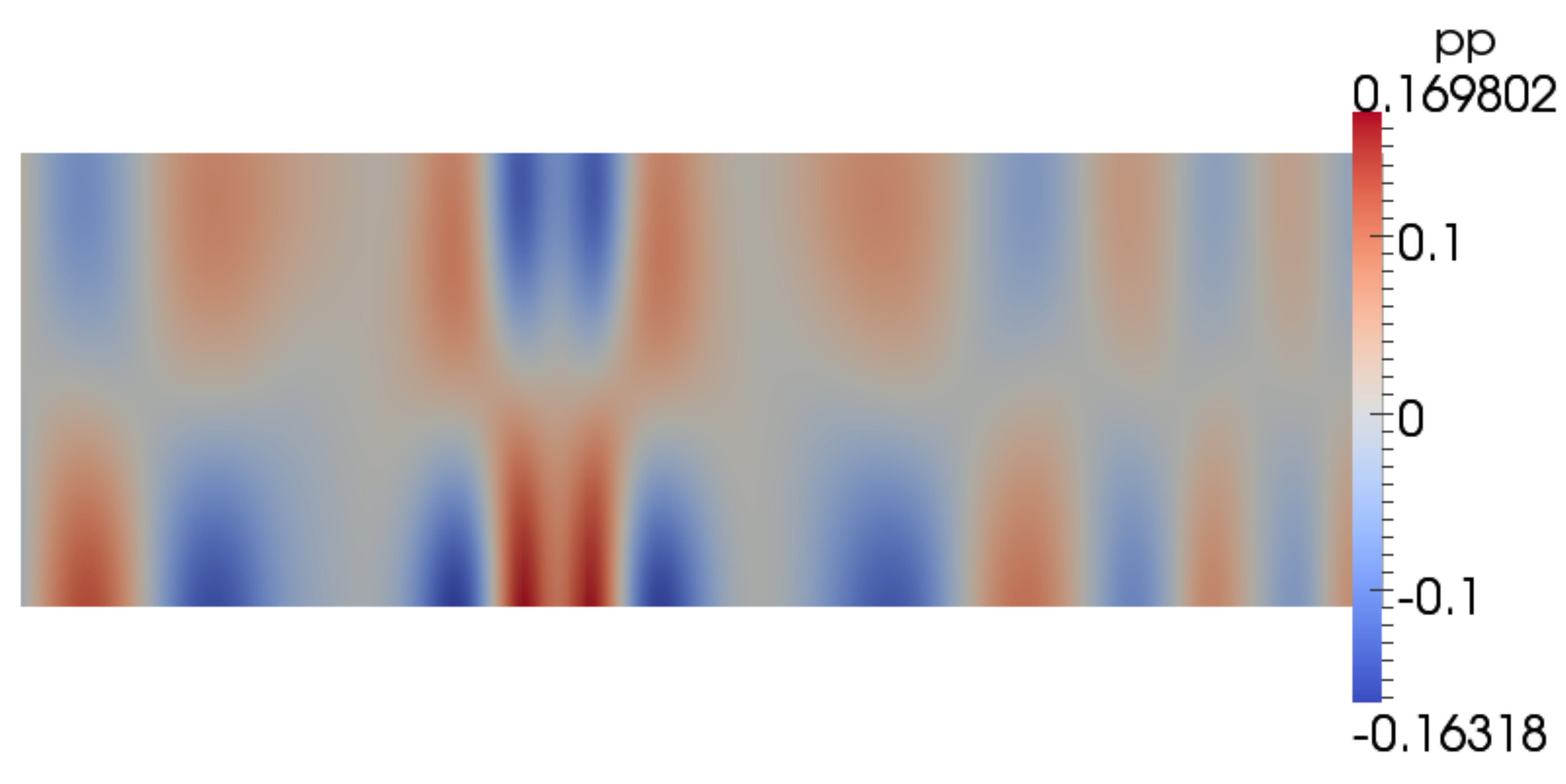}
 }
}
\caption{Solution of the gravity waves after $900$ simulation seconds. The results are obtained with a third-order discontinuous Galerking space discretization and third-order IMEX DIMSIM time integration.\label{fig:snapshots}}
\end{figure}
\begin{figure}
\centering{
  \subfigure[Convergence]{
  \includegraphics[width=0.45\textwidth]{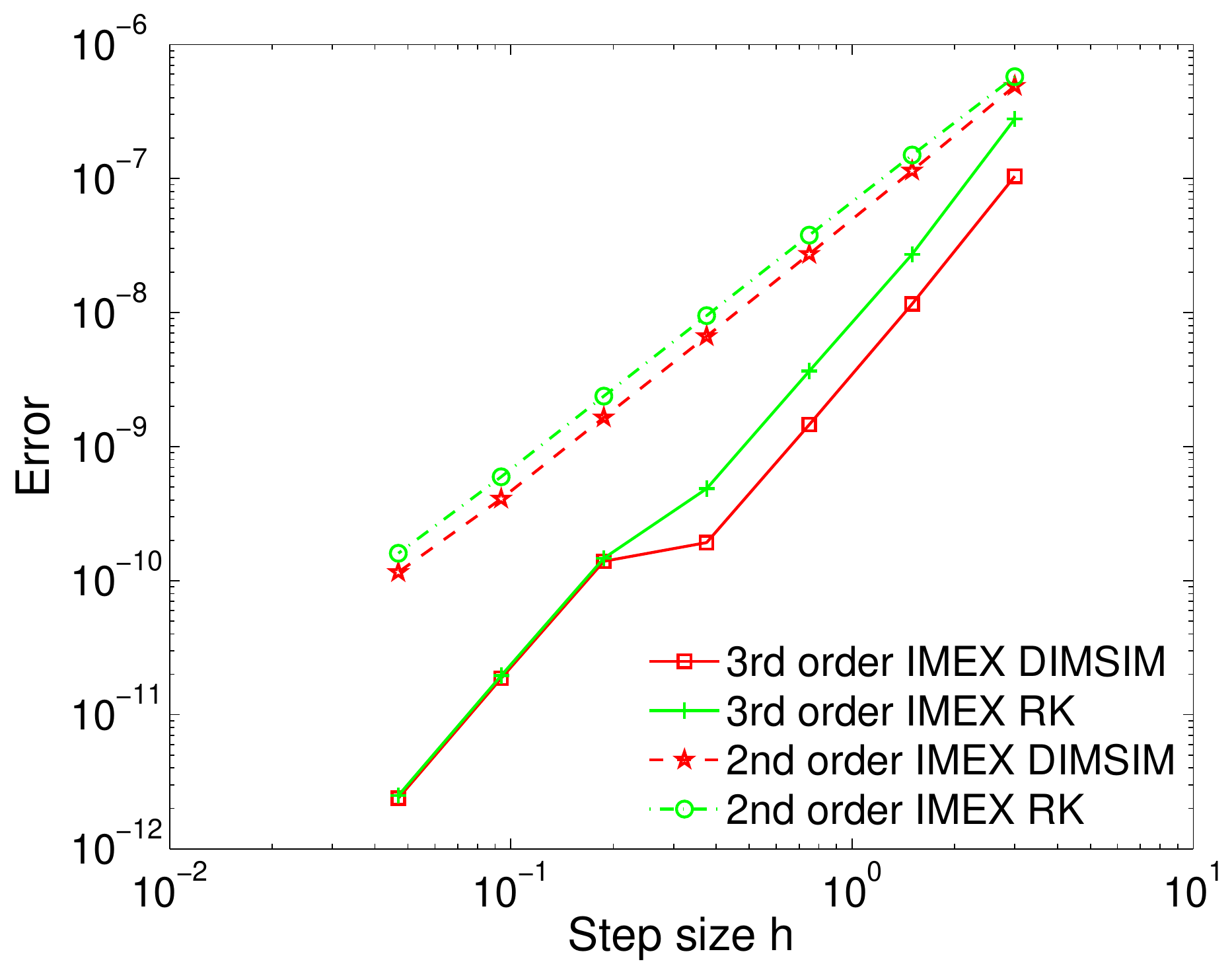}
  \label{fig:gw_convergence}
  }
  \subfigure[Work-precision diagram]{
  \includegraphics[width=0.45\textwidth]{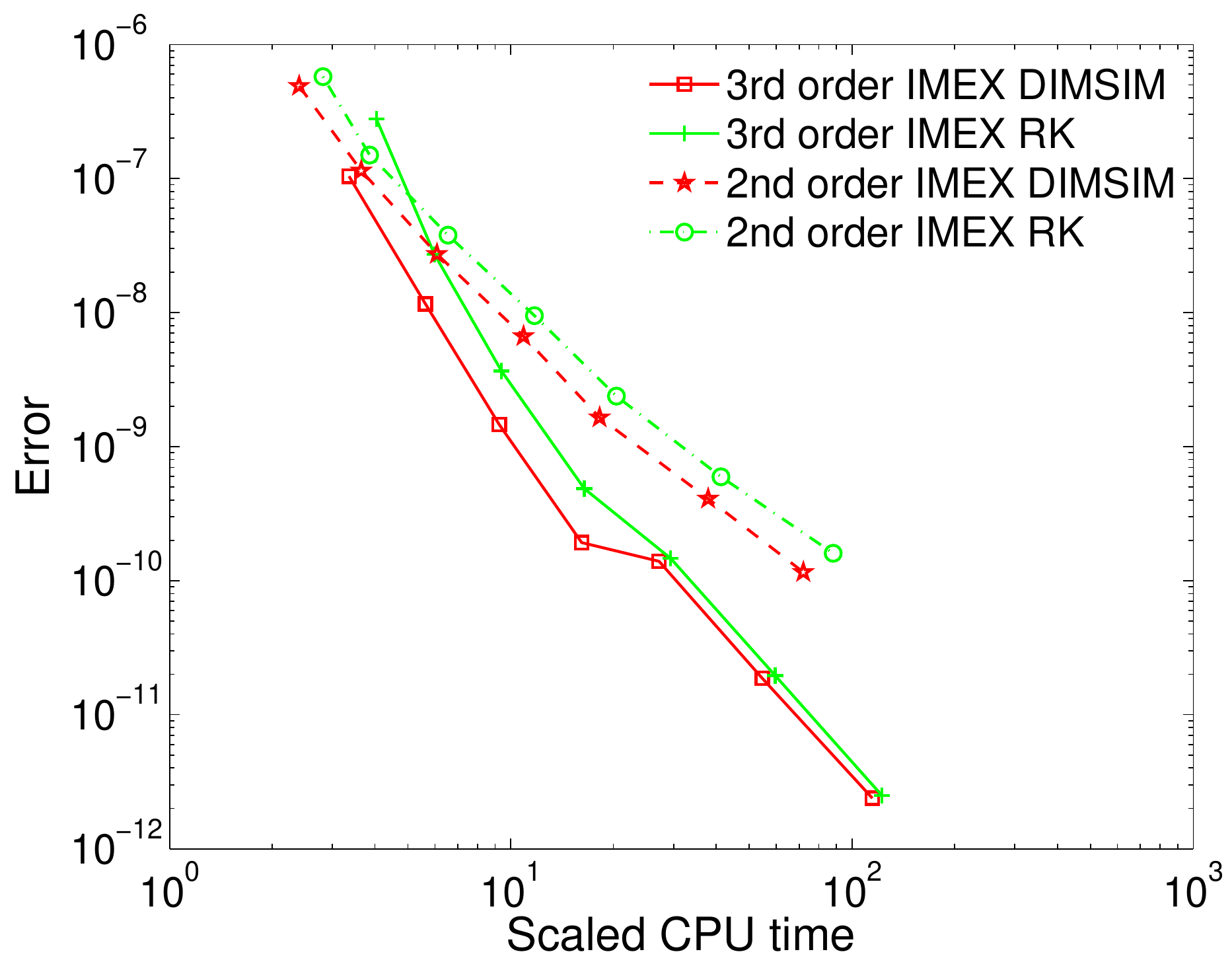}
  \label{fig:gw_work_precision}
  }
  \caption{\label{fig:gw}Integrated $L_2$ errors against time steps (a) and CPU time (b) for difference IMEX schemes. The errors are computed after $30$ s of simulation. 
  A geometric sequence of step sizes, $\tau$, $\tau/2$, $\tau/4$ and so on, is used.}
}
\end{figure}
%

\section{Conclusions and future work}\label{sec:conclusions}

In this paper introduce a new family of partitioned time integration methods based on high stage order general linear methods.
We prove that the general linear framework is well suited for the construction of multi-methods. 
Specifically, owing to the high stage orders,  no coupling conditions are needed to ensure the order of accuracy of the partitioned GLM.

We apply the partitioned general linear framework to construct new implicit-explicit GLM pairs, together with 
appropriate starting and ending procedures. The linear stability analysis proposes the use of constrained stability functions to quantify the joint stability of the IMEX pair. A Prothero-Robinson convergence analysis reveals that the order of an IMEX GLM scheme on very stiff problems is dictated by the stage order of its non-stiff component; in particular, no order reduction appears if the explicit method has a full stage order.
This result indicates that IMEX GLMs are particularly attractive for solving stiff problems, where other multistage methods may suffer from order reduction.    

We discuss the construction of practical IMEX GLM pairs starting from known implicit schemes and adding
an appropriate explicit counterpart. This strategy is applied to build second and third order IMEX diagonally-implicit-explicit multi-stage integration methods. 
Numerical experiments with the van der Pol equation confirm the fact that IMEX GLMs converge at full order while IMEX RK methods suffer from order reduction. The two dimensional gravity wave system is an important step towards solving real PDE-based problems. The new IMEX-DIMSIM schemes perform slightly better than the IMEX RK methods of the same order.

Future work will develop IMEX-GLMs of higher orders, will endow them with adaptive time stepping capabilities, 
and will study their advantages compared to other existing IMEX familiess. 
There are also implementation issues that deserve further exploration.

\section*{Acknowledgements}

The authors wish to thank Dr. Sebastien Blaise for making his GMSH/DG code, and the implementation of the  gravity waves problem, available for this work. We also thank him for his continuous support during our study.
This work has been supported in part by NSF through awards NSF
OCI-8670904397, NSF CCF-0916493, NSF DMS-0915047, NSF CMMI-1130667, NSF CCF -- 1218454
AFOSR FA9550--12--1--0293--DEF, FOSR 12-2640-06, DoD G\&C 23035,
and by the Computational Science Laboratory at Virginia Tech.


\end{document}